\newtheorem{thm}{Theorem}[section]
\newtheorem{lem}[thm]{Lemma}
\newtheorem{clm}[thm]{Claim}
\newtheorem{ques}[thm]{Question}
\numberwithin{equation}{section}
\newcommand{\G}{\mathcal{G}}
\newcommand{\I}{\mathcal{I}}
\newcommand{\F}{\mathcal{F}}
\newcommand{\PS}{{\mathcal{P}_{4,5}}}
\newcommand{\FF}[2]{$(\F_{#1},\F_{#2})$-partition}
\newcommand{\Ftf}{{\FF{3}{4}}}
\newcommand{\DeltaDelta}[2]{$(\Delta_{#1},\Delta_{#2})$-partition}
\begin{document}

\title{Partitioning planar graphs without $4$-cycles and $5$-cycles into bounded degree forests}

\author{Eun-Kyung Cho\thanks{
Department of Mathematics, Hankuk University of Foreign Studies, Yongin-si, Gyeonggi-do, Republic of Korea.
 \texttt{ekcho2020@gmail.com}
}
\and Ilkyoo Choi\thanks{Corresponding author. Department of Mathematics, Hankuk University of Foreign Studies, Yongin-si, Gyeonggi-do, Republic of Korea.
\texttt{ilkyoo@hufs.ac.kr}
}
\and Boram Park\thanks{
Department of Mathematics, Ajou University, Suwon-si, Gyeonggi-do, Republic of Korea.
\texttt{borampark@ajou.ac.kr}
}
} 

\date\today

\maketitle

\begin{abstract}
In 1976, Steinberg conjectured that planar graphs without $4$-cycles and $5$-cycles are $3$-colorable.
This conjecture attracted numerous researchers for about 40 years, until it was  recently disproved by Cohen-Addad et al. (2017). 
However, coloring planar graphs with restrictions on cycle lengths is still an active  area  of research, and the interest in this particular graph class remains. 
 
Let $G$ be a planar graph without $4$-cycles and $5$-cycles. 
For integers $d_1$ and $d_2$ satisfying $d_1+d_2\geq8$ and $d_2\geq d_1\geq 2$, it is known that $V(G)$ can be partitioned into two sets $V_1$ and $V_2$, where each $V_i$ induces a graph with maximum degree at most $d_i$.
Since Steinberg's Conjecture is false, a partition of $V(G)$ into two sets, where one induces an empty graph and the other induces a forest is not guaranteed.
Our main theorem is at the intersection of the two aforementioned research directions.
We prove that 
 $V(G)$ can be partitioned into two sets $V_1$ and $V_2$, where $V_1$ induces a forest with maximum degree at most $3$ and $V_2$ induces a forest with maximum degree at most $4$;
this is both a relaxation of Steinberg's conjecture and  a strengthening of results by Sittitrai and Nakprasit (2019) in a much stronger form.
\end{abstract}

\section{Introduction}\label{sec:intro}

We consider only finite simple graphs.
Given a graph $G$, let $V(G)$ and $E(G)$ denote the vertex set and edge set, respectively, of $G$.
A graph is {\it $k$-colorable} if its vertex set can be partitioned into $k$ color classes so that each color class induces an empty graph. 

The celebrated Four Color Theorem~\cite{AH1977, AHK1977} states that every planar graph is $4$-colorable.
Since there are planar graphs that are not $3$-colorable, finding sufficient conditions for planar graphs to be $3$-colorable is an active area of research. 
There is a vast literature in this direction, see an excellent survey by Borodin~\cite{Borodin2013}.
Before the Four Color Theorem was proved, Gr\"otzsch~\cite{Grotzsch1959} proved a result implying that a planar graph without $3$-cycles is $3$-colorable. 
In 1976, Steinberg (see~\cite{Steinberg1993}) conjectured that forbidding the next two cycle lengths should also be sufficient for a planar graph to be $3$-colorable.
Namely, Steinberg's Conjecture stated that every planar graph without $4$-cycles and $5$-cycles is $3$-colorable.

Steinberg's Conjecture attracted the interest of numerous researchers, but not much progress was made until Erd\H{o}s suggested the following relaxation: determine the minimum $k$ such that every planar graph without cycle lengths in $\{4, \ldots, k\}$ is $3$-colorable.
After progressions made by various researchers~\cite{AZ1991,Borodin1979,Borodin1996,SZ1995}, Borodin et al.~\cite{BGRS2005} showed $k=7$ is sufficient. 
Astonishingly, Steinberg's Conjecture was recently disproved~\cite{CHKLS2017},
yet, its legacy remains as studying the chromatic number of planar graphs with restrictions on cycle lengths is still an active area of research. 
In particular, the class of graphs considered in Steinberg's Conjecture, which are planar graphs without $4$-cycles and $5$-cycles, is still a popular domain of investigation.
Let $\PS$ denote the class of planar graphs without $4$-cycles and $5$-cycles.

For each $i\in\{1, \ldots, k\}$, let $\G_i$ be a class of graphs. 
Given a graph $G$, a {\it $(\G_1,\ldots,\G_k)$-partition} of $G$ is a  partition of its vertex set into $k$ sets $V_1, \ldots, V_k$ such that  $V_i$ induces a graph in $\G_i$ for each $i\in\{1, \ldots, k\}$.
Let $\F_d$ and $\Delta_d$ denote the class of forests and graphs, respectively, with maximum degree at most $d$.
Note that $\F_d  \subseteq \Delta_d$, and 
 equality holds only if and only if $d\in\{0, 1\}$.
 In particular, denote $\F_0$ by $\I$, which are classes of empty graphs and forests, respectively. 
Using this notation, the Four Color Theorem is equivalent to the statement that every planar graph has an $(\I,\I,\I,\I)$-partition.  
See Table~\ref{table:partition} for a summary of selected related results for such partitions of planar graphs with restrictions on girth. 

\begin{table}[h]
\begin{center}
\small{
\begin{tabular}{ | c | c | c | }
\hline
Classes & Partitions & References  \\
\hline\hline
 & $(\I, \I, \I, \I) $ & The Four Color Theorem \cite{AH1977, AHK1977}\\
 & $(\I, \F, \F)$ & Borodin \cite{Borodin1976}\\
Planar graphs & $(\F_{2}, \F_{2}, \F_{2})$ & Poh \cite{Poh1990} \\
 & $(\Delta_{2}, \Delta_{2}, \Delta_{2})$ &  Cowen, Cowen, Woodall~\cite{CCW1986} \\
 & no $(\Delta_{1}, \Delta_{d_1}, \Delta_{d_2})$ & Choi and Esperet \cite{CE2019} \\
\hline
 & $(\I, \I, \I)$ & Gr\"{o}tzsch \cite{Grotzsch1959} \\
Planar graphs with girth $4$ & $(\F_{5}, \F)$ & Dross, Montassier, Pinlou \cite{DMP2017}\\
 & no $(\Delta_{d_1}, \Delta_{d_2})$ & Montassier, Ochem \cite{MO2013} \\
\hline
 & $(\I, \F)$ & Borodin and Glebov \cite{BG2001} \\
Planar graph with girth $5$ & $(\Delta_1, \Delta_{10})$ & Choi et al.  \cite{CCJS2017} \\
 & $(\Delta_2, \Delta_{6})$ & Borodin, Kostochka \cite{BK2014} \\
 & $(\Delta_3, \Delta_{4})$ & Choi, Yu, Zhang  \cite{CYZ2019} \\
\hline
Planar graph with girth $6$ & no $(\I, \Delta_{d})$ & Borodin et al. \cite{BIMOR2010}\\
\hline
\end{tabular}}
\caption{Related results on partitions of planar graphs with girth restrictions}\label{table:partition}
\end{center}
\end{table}

The situation where each color class induces a graph with bounded degree is also known as {\it defective coloring} in the literature. 
Sittitrai and Nakprasit~\cite{SN2018} proved that there does not exist an integer $k$, where graphs in $\PS$ have a \DeltaDelta{1}{k}.
They also proved that graphs in $\PS$ have a \DeltaDelta{4}{4}, a \DeltaDelta{3}{5}, and a \DeltaDelta{2}{9}.
Liu and Lv~\cite{LL2019} improved the last result by showing that every graph in $\PS$ has a \DeltaDelta{2}{6}.

\begin{thm}[\cite{SN2018,LL2019}]\label{SN-44-35}
Every planar graph without $4$-cycles and $5$-cycles has a \DeltaDelta{4}{4}, a \DeltaDelta{3}{5}, and a \DeltaDelta{2}{6}.
\end{thm}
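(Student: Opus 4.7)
The plan is to establish each of the three partition statements by a discharging argument applied to a vertex-minimum counterexample. Since $(\Delta_4,\Delta_4)$, $(\Delta_3,\Delta_5)$, and $(\Delta_2,\Delta_6)$ share a common two-colour defective flavour, I would set up a parallel framework and then specialise the local rules to each case.

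Fix a target pair $(d_1, d_2)$. Suppose $G \in \PS$ is a counterexample, minimum under vertex deletion, to having a $(\Delta_{d_1}, \Delta_{d_2})$-partition. The first step is to derive a list of reducible configurations. The prototype is a vertex $v$ of small degree: by minimality, $G-v$ admits a $(\Delta_{d_1}, \Delta_{d_2})$-partition, and if $v$ can be inserted into one of the two classes without raising the degree of any neighbour beyond the allowed bound, we obtain a partition of $G$. The direct argument immediately handles vertices of degree at most $d_1$, but it can be strengthened by local swap tricks: when the natural insertion is blocked because some neighbour $u \in V_i$ already has $d_i$ neighbours in $V_i$, one attempts to move $u$ to $V_{3-i}$ and iterates. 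Such swap arguments typically forbid specific configurations of adjacent low-degree vertices, and of low-degree vertices sitting on a common $3$-face.

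The second step is a discharging argument based on Euler's formula. Assign each vertex $v$ the initial charge $\mu(v) = d(v) - 4$ and each face $f$ the initial charge $\mu(f) = \ell(f) - 4$, so that the total initial charge is $-8$. In a graph from $\PS$ every face has length $3$ or at least $6$, and moreover no two $3$-faces share an edge (two such triangles would create a $4$-cycle). Thus only $3$-faces and vertices of degree at most $3$ carry negative charge, while vertices of degree at least $5$ and faces of length at least $6$ carry positive charge. I would then design discharging rules that redistribute charge from $\ge 6$-faces and from vertices of sufficiently high degree onto the $3$-faces and onto the surviving low-degree vertices, and verify that the final charge is non-negative everywhere; combined with the absence of the reducible configurations, this contradicts the $-8$ total and establishes the theorem.

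The main obstacle is calibrating the list of reducible configurations and the discharging rules so that they match up. The $(\Delta_2, \Delta_6)$ case of Liu and Lv is the most delicate, because $\Delta_2$ permits each vertex of $V_1$ to have at most two neighbours inside $V_1$, so the swap arguments must keep track of the paths and short cycles that would persist inside $V_1$; consequently the reducibility analysis must handle chains of low-degree vertices rather than isolated ones. On the discharging side, the hardest objects to neutralise are $3$-faces whose three incident vertices all have comparatively low degree and whose three surrounding faces all have length exactly $6$: designing rules that guarantee each such face receives enough charge, while still keeping the $6$-faces and the medium-degree vertices non-negative, is the technical heart of the argument.
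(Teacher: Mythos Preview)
The paper does not prove Theorem~\ref{SN-44-35}; it is stated as a known result, attributed to Sittitrai and Nakprasit~\cite{SN2018} for the $(\Delta_4,\Delta_4)$- and $(\Delta_3,\Delta_5)$-partitions and to Liu and Lv~\cite{LL2019} for the $(\Delta_2,\Delta_6)$-partition, and no argument is supplied. There is therefore nothing in the paper to compare your proposal against.

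Your outline is a reasonable high-level description of the discharging-on-a-minimal-counterexample paradigm that those cited papers do in fact use, but as written it is only a plan, not a proof: you have not specified any concrete reducible configuration beyond ``vertices of degree at most $d_1$'', you have not stated any actual discharging rule, and you have not verified that any final charge is non-negative. The substance of each of the three results lies entirely in the case-specific lists of reducible configurations and the matching rules, and your proposal defers all of that (``calibrating the list of reducible configurations and the discharging rules so that they match up'') without supplying any of it. If the intent was to reproduce the proofs of \cite{SN2018,LL2019}, you would need to carry out those details; if the intent was simply to cite the result, then no proof is required here, which is exactly what the paper does.
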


We focus on the case when one of the color classes is more restrictive, namely, forests with bounded degree. 
Since Steinberg's Conjecture is false, this implies that there is a graph in $\PS$ that does not have an $(\I, \F)$-partition.
By relaxing the first color class from $\I$ to $\F_d$ for some $d$, it is natural to ask if each graph in $\PS$ has an $(\F_d, \F)$-partition or not.
Surprisingly, our main result states that it is even possible to bound the maximum degree of the second color class as soon as $d=3$. 

We remark that our main result is at the intersection of the two aforementioned research directions; 
it is both a relaxation of Steinberg's Conjecture and also a strengthening of results by Sittitrai and Nakprasit in a much stronger form.
Our main result is the following: 

\begin{thm}\label{thm:main}
Every planar graph without $4$-cycles and $5$-cycles has an \Ftf.
\end{thm}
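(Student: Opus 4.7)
The plan is to proceed by contradiction via a minimum counterexample and a discharging argument. Let $G$ be a counterexample to Theorem~\ref{thm:main} minimizing $|V(G)|+|E(G)|$; since the class $\PS$ is closed under taking subgraphs, every proper subgraph of $G$ admits an \Ftf. The strategy is first to compile a list of \emph{reducible configurations}---small local structures that cannot occur in $G$ because one could delete (or identify) some vertices, invoke minimality on the resulting graph $G'$, and then extend the \Ftf\ of $G'$ back to $G$---and then to invoke Euler's formula together with a tailored discharging scheme to prove that $G$ must contain one of them, a contradiction.

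For the extension step, the key observation is that placing a vertex $v$ into class $V_i$, with $(d_1,d_2)=(3,4)$, fails only if (a) $v$ already has $d_i$ neighbors in $V_i$, or (b) two of $v$'s neighbors in $V_i$ lie in the same component of $G'[V_i]$, which would create a cycle. Both obstructions are local, so vertices of small degree and short chains of low-degree vertices are typically reducible. I expect the first batch of reductions to force $\delta(G)\ge 3$, and then to forbid a variety of \emph{light} configurations, such as a $3$-vertex with several $3$-neighbors, or pairs of adjacent low-degree vertices on a triangular face; the precise list will be dictated by the discharging that follows.

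For the discharging, the structural constraints from $\PS$ are very strong: every face has length $3$ or length at least $6$, and since two triangles sharing an edge would produce a $4$-cycle, triangles are edge-disjoint and each triangle is surrounded by faces of length at least~$6$. A natural choice of initial charge is $\mu(v)=d(v)-4$ and $\mu(f)=\ell(f)-4$, giving total charge $-8$ by Euler's formula; the only objects with negative charge are $3$-vertices and $3$-faces, while faces of length $\ell\ge 6$ carry a surplus of $\ell-4\ge 2$. I would then design rules that funnel charge from big faces and from vertices of degree at least $5$ through incident edges to absorb the deficits at triangles and at $3$-vertices, exploiting the large girth of the neighborhoods of triangles.

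The main obstacle is engineering the reducible configurations so that, on the one hand, the discharging analysis eventually assigns nonnegative charge to every object, and on the other, each configuration really can be absorbed back into a partition. Unlike purely degree-bounded partitions such as those in Theorem~\ref{SN-44-35}, we must preserve the forest property in both classes, so the extension argument has to track not merely degrees in $G'[V_i]$ but also the connectivity of the neighbors of the removed set within each color class; this rules out some configurations that would otherwise be reducible for a \Dtf, and forces a more delicate local case analysis, particularly at vertices incident to a triangle where charge is scarce and where any attempted swap must simultaneously respect degree and acyclicity constraints in both $V_1$ and $V_2$. Getting these two sides to meet is where the bulk of the technical work will lie.
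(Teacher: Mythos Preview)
Your overall strategy---a minimum counterexample combined with reducible configurations and discharging---is exactly the paper's approach, and your remarks about the two obstructions to extending a partition (degree-saturation and the cycle condition) are on point. However, there is a concrete gap: your expectation that ``the first batch of reductions [will] force $\delta(G)\ge 3$'' is false for \Ftf s. If $v$ is a $2$-vertex with neighbors $u,w$ and $(A_3,A_4)$ is an \Ftf\ of $G-v$, you can place $v$ whenever $u,w$ lie in the same part; but when $u\in A_3$ is $A_3$-saturated and has an $A_4$-neighbor, while $w\in A_4$ is $A_4$-saturated and has an $A_3$-neighbor, neither a direct placement nor the obvious single swap works. All one can deduce is that every $2$-vertex has two $5^+$-neighbors (this is Lemma~\ref{lem:rc-1vx}(iii) in the paper).

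Accordingly, the paper retains $2$-vertices throughout: it introduces the set $W_2$ of $2$-vertices off triangles and the sets $F_2$, $F_2^*$ of $3$-faces carrying a $2$-vertex, and three of its seven discharging rules are designed specifically to feed charge to $2$-vertices (from $5^+$-neighbors, from the incident $3$-face, and from the incident $7^+$-face). Several of the subtlest reducible configurations---the notion of a ``bad'' $d$-vertex for $d\in\{6,7,8\}$ and the associated $F_2^*$-faces---exist precisely to balance the books near $2$-vertices. Under your balanced scheme $\mu(v)=d(v)-4$, $\mu(f)=\ell(f)-4$, each surviving $2$-vertex carries deficit $-2$, and you would have to route that charge in from nearby $5^+$-vertices and $6^+$-faces; this is not hopeless, but it is a substantial portion of the proof, not something the reductions eliminate. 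The paper instead uses the vertex-weighted scheme $\mu(v)=2\deg(v)-6$, $\mu(f)=\deg(f)-6$, which makes $3$-vertices neutral and puts all surplus on $4^+$-vertices, a choice that meshes better with the fact that low-degree vertices here are constrained mainly through their high-degree neighbors.
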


Recall that in all previously known results proving that every graph in $\PS$ has a \DeltaDelta{d_1}{d_2}, the sum satisfies $d_1+d_2\geq 8$.
Theorem~\ref{thm:main} provides the first result showing that such a partition is possible even when the sum is less than 8, even in the stronger form since each color class has no cycles.

We use the discharging method to prove Theorem~\ref{thm:main}. 
The proof of Theorem~\ref{thm:main} is split into Sections~\ref{sec:f34} and~\ref{sec:rc-proofs}. 
Section~\ref{sec:f34} lays out the discharging rules and reducible configurations. 
The proofs of the reducible configurations are presented in Section~\ref{sec:rc-proofs}.
In Section~\ref{sec:future}, some future research directions are suggested.

We end this section with some definitions  used throughout the paper. 
A {\it $k$-vertex}, {\it $k^{+}$-vertex}, and {\it $k^{-}$-vertex} are a vertex of degree $k$, at least $k$, and at most $k$, respectively. 
A {\it $k$-neighbor} of a vertex is a neighbor that is a $k$-vertex; 
{\it $k^+$-neighbor} and {\it $k^-$-neighbor} are defined analogously.  
Similarly, a {\it $k$-face}, {\it $k^+$-face}, and {\it $k^-$-face} are also defined. 
A {\it $(d_1, d_2, d_3)$-face} is a $3$-face, where the vertices on the face have degrees $d_1, d_2, d_3$. 
If a vertex $v$ is adjacent to a $3$-vertex $u$ on a $3$-face $f$, but $v$ is not on $f$, then $f$ is a {\it pendent face} of $v$ and $v$ is the {\it pendent neighbor} of $u$.
Note that throughout the figures in the paper, the degree of a solid (black) vertex is the number of incident edges drawn in the figure, whereas a hollow (white) vertex indicates a $2^+$-vertex.

%%%%%%%%%%%%%%%%%%%%%%%%%%%%%%%%%%%%%%%%%%%%%%%%%%%%%%%%%%%%%%%
%%%%%%%%%%%%%%%%%%%%%%%%%%%%%%%%%%%%%%%%%%%%%%%%%%%%%%%%%%%%%%%
%%%%%%%%%%%%%%%%%%%%%%%%%%%% F3, F4 %%%%%%%%%%%%%%%%%%%%%%%%%%%
%%%%%%%%%%%%%%%%%%%%%%%%%%%%%%%%%%%%%%%%%%%%%%%%%%%%%%%%%%%%%%%
%%%%%%%%%%%%%%%%%%%%%%%%%%%%%%%%%%%%%%%%%%%%%%%%%%%%%%%%%%%%%%%

\section{Proof of Theorem~\ref{thm:main}: \Ftf}\label{sec:f34}

Let $G$ be a counterexample to Theorem~\ref{thm:main} with the minimum number of vertices, and fix a plane embedding of $G$. 
Let $F(G)$ be the set of faces of $G$.
In this section, we provide a list of subgraphs where each subgraph does not appear in $G$; each such subgraph is referred to as a {\it reducible configuration}. 
We then lay out the discharging rules to reach a contradiction.   
The proofs of the reducible configurations are in Section~\ref{sec:rc-proofs}.

\subsection{Reducible Configurations}

We first define the following sets. 
\begin{eqnarray*}
W_2 &=&\{v\in V(G)\mid \text{$v$ is a $2$-vertex not on a $3$-face}  \}\\
F_2 &=&\{f\in F(G)\mid \text{$f$ is a $3$-face incident with a $2$-vertex}\}\\
F_3&=& \{f\in F(G)\mid \text{$f$ is a $3$-face incident with a $3$-vertex}\}
\end{eqnarray*}
A $3$-face in $F_3$ is {\it terrible} if it is a $(3,3,d)$-face with a pendent $4^-$-neighbor. \begin{figure}[h!]
  \centering
  \includegraphics[width=2.3cm,page=3]{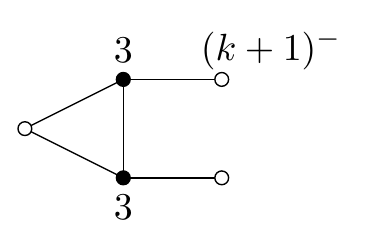}
  \caption{A terrible face}
  \label{fig:ter_face}
\end{figure}
See Figure~\ref{fig:ter_face}. For $d \in \{6,7,8\}$, a $d$-vertex is \textit{bad} if it is incident with exactly $(d-5)$ terrible faces, one non-terrible face in $F_2 \cup F_3$, and all neighbors not on those $(d-4)$ $3$-faces are $3^-$-vertices. 
Note that a bad vertex has at most one $4^+$-neighbor.
See Figure~\ref{fig:bad6,7,8}. Let 
$$ F^*_2 = \{ f \in F_2 \mid f \text{ is incident with either a $5$-vertex or a bad vertex} \}.$$

\begin{figure}[h!]
  \centering \includegraphics[width=0.81\textwidth]{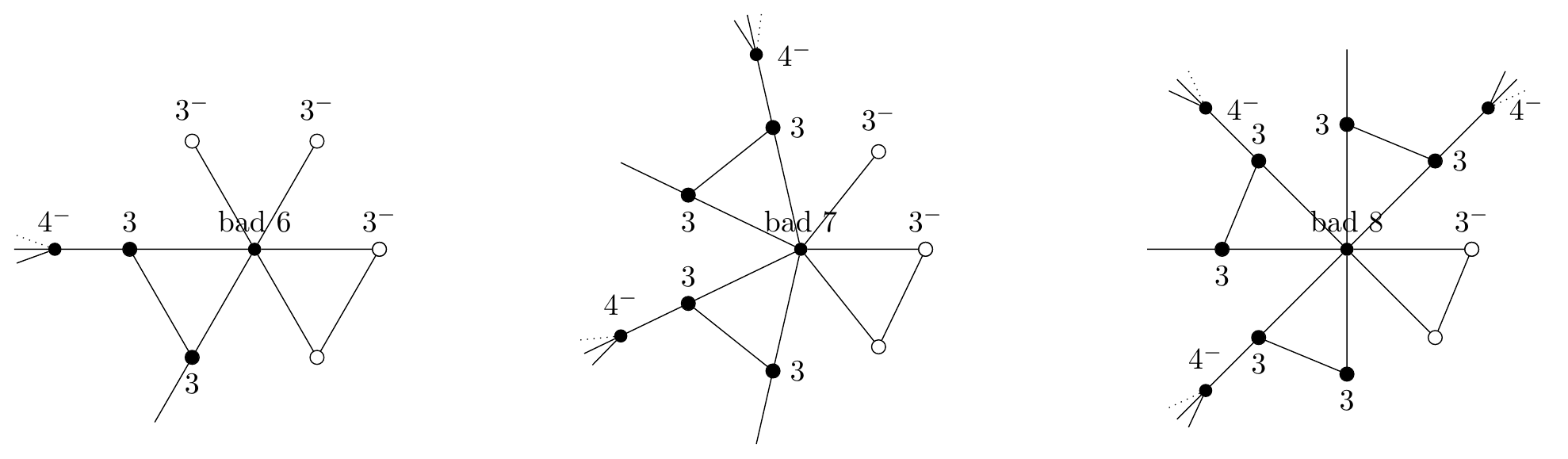}
  \caption{A bad $6$-, $7$-, $8$-vertex}  
  \label{fig:bad6,7,8}
\end{figure}
 
The following is a list of reducible configurations. 
We postpone the proofs to Section~\ref{sec:rc-proofs}. 

\begin{enumerate}[label=\textbf{[C\arabic*]}]
\item\label{rc-7-face}
(Lemma~\ref{lem:rc-1vx} (i))
A $7^+$-face $f$ incident with $(\deg(f)-5)$ $2$-vertices on $F_2$.
\item\label{rc-1vx} (Lemma~\ref{lem:rc-1vx} (ii)) A $1^-$-vertex. 
\item\label{rc-2vx4} (Lemma~\ref{lem:rc-1vx} (iii)) A $2$-vertex with a $4^-$-neighbor. 
\item\label{rc-bad2face256} (Lemma~\ref{lem:rc-1vx} (iv)) A $(2, 5, 6^-)$-face.
\item\label{rc-W3pen4} (Lemma~\ref{lem:rc-1vx} (v)) 
A $(3,5^-,5^-)$-face with a pendent $4^-$-neighbor.
\item\label{rc-5vxa} (Lemma~\ref{lem:5-vertex} (i)) A $5$-vertex with only $3^-$-neighbors, where one of them is in $W_2$.
\item\label{rc-5vxb} (Lemma~\ref{lem:5-vertex} (ii))
A $5$-vertex with five pendent $3$-faces, where four of them are $(3,5^-,5^-)$-faces.

\item\label{rc-bad2face257a} (Lemma~\ref{lem:rc-bad2face257} (i)) 
A bad vertex with only $6^-$-neighbors.
\item\label{rc-bad2face257b} (Lemma~\ref{lem:rc-bad2face257} (ii)) 
An $(F_2 \cup F_3)$-face with two bad vertices.
\item\label{rc-mediumd-3T3} (Lemma~\ref{lem:rc-bad2face257} (iii)) A $6$-vertex on three $3$-faces, one of which is a terrible face.
\item\label{rc-mediumd-4T3} (Lemma~\ref{lem:rc-bad2face257} (iv)) For $d\in\{6, \ldots, 10\}$, a $d$-vertex on $(d-4)$ terrible faces.
\item\label{rc-mediumd-5T3-6} (Lemma~\ref{lem:rc-bad2face257} (v)) 
For $d\in\{6,\ldots,10\}$, a $d$-vertex $v$ on $(d-5)$ terrible faces, where $v$ has only $3^-$-neighbors.
\item\label{rc-mediumT3H3} (Lemma~\ref{lem:rc-mediumT3H3}) For $d\in\{7, \ldots, 10\}$, a non-bad $d$-vertex $v$ on an $F^*_2$-face, where $v$ is on $(d-6)$ other $3$-faces, each of which is either terrible or in $F^*_2$.
\end{enumerate}

%%%%%%%%%%%%%%%%%%%%%%%%%%%%%%%%%%%%%%%%%%%%%%%%%%
%%%%%%%%%%%%%%%%%%%%%%%%%%%%%%%%%%%%%%%%%%%%%%%%%%
%%%%%%%%%%%%%%%%%%%%%%%%%%%%%%%%%%%%%%%%%%%%%%%%%%
%%%%%%%%%%%%%%%%%% DISCHARGING %%%%%%%%%%%%%%%%%%%
%%%%%%%%%%%%%%%%%%%%%%%%%%%%%%%%%%%%%%%%%%%%%%%%%%
%%%%%%%%%%%%%%%%%%%%%%%%%%%%%%%%%%%%%%%%%%%%%%%%%%
%%%%%%%%%%%%%%%%%%%%%%%%%%%%%%%%%%%%%%%%%%%%%%%%%%

\subsection{Discharging}

In order to reach the final contradiction, we use the discharging technique.
To each vertex $v$ and each face $f$, let $2\deg(v)-6$ and $\deg(f)-6$ be its {\it initial charge} $\mu(v)$ and $\mu(f)$, respectively.
The total initial charge is negative, since Euler's formula implies
\[\sum_{v \in V(G)} \mu(v) + \sum_{f \in F(G)} \mu(f)=
\sum_{v \in V(G)} (2\deg(v)-6) + \sum_{f \in F(G)} (\deg(f)-6)
= -12<0.\]
We then redistribute the charge at the vertices and faces according to carefully designed {\it discharging rules}, which preserve the total charge sum.
After the discharging procedure, we end up with {\it final charge} $\mu^*(z)$ at each $z\in V(G)\cup F(G)$. 
We will prove that the final charge at each $z$ is non-negative, to conclude that the total final charge sum is non-negative. 
This is the final contradiction since the total charge sum was preserved. 
The discharging rules are as follows.
See Figure~\ref{fig:rules}.

\begin{figure}[h!]
  \centering
\includegraphics[width=\textwidth]{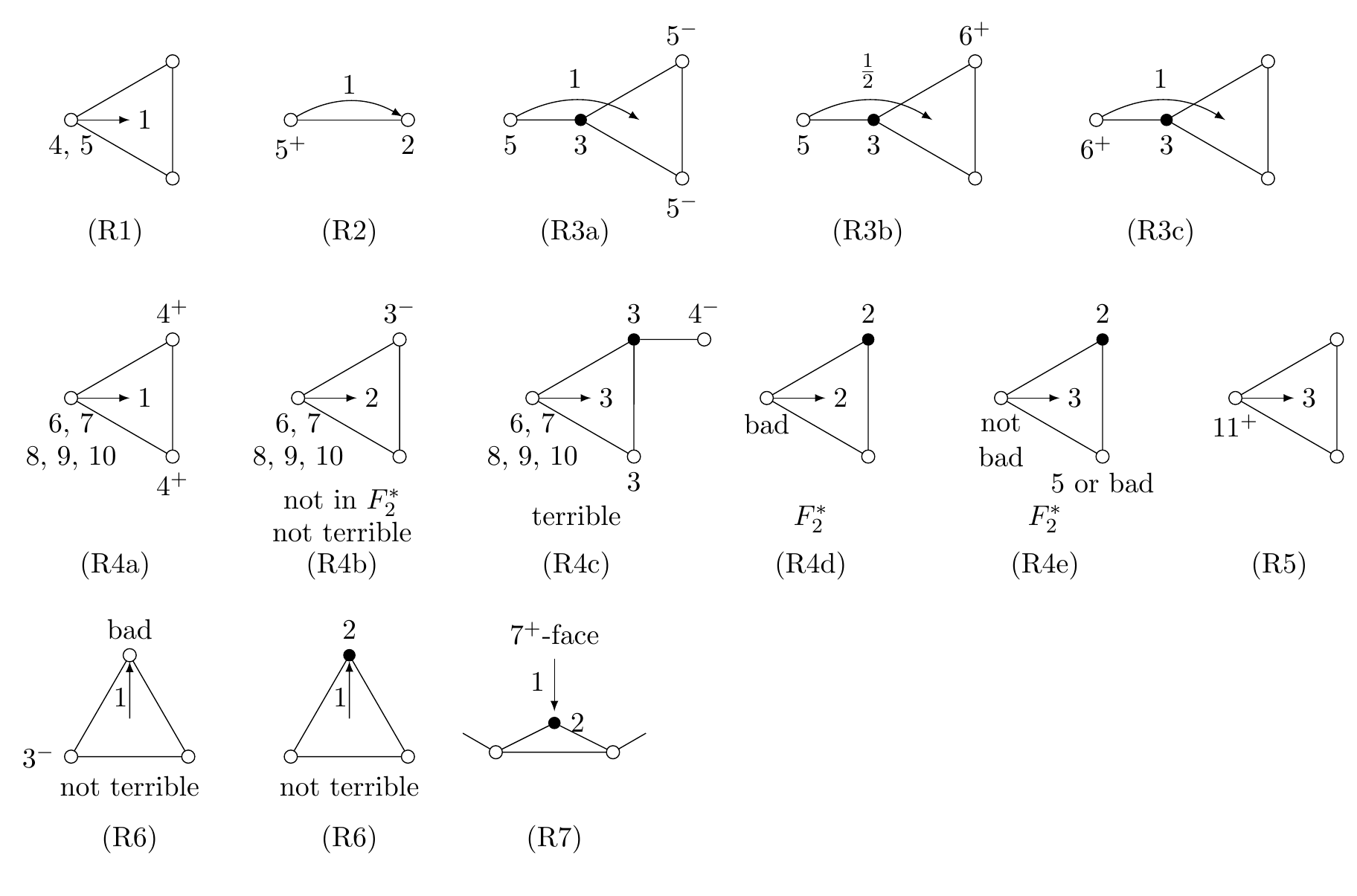}  
  \caption{An illustration of the discharging rules}
  \label{fig:rules}
\end{figure}

\begin{enumerate}[label=\textbf{R\arabic*}]
\item\label{rule-45} For $d\in\{4, 5\}$, each $d$-vertex sends charge 1 to each incident $3$-face. 
\item\label{rule-good2} Each $5^+$-vertex sends charge $1$ to each neighbor in $W_2$.
\item\label{rule-pendent} Let $v$ be a $5^+$-vertex.
\begin{enumerate}
    \item\label{rule-5_ch1} If $v$ is a $5$-vertex, then it sends charge $1$ to each pendent $(3,5^-,5^-)$-face.
    \item\label{rule-5_ch1/2} If $v$ is a $5$-vertex, then it sends charge $\frac{1}{2}$ to each pendent face that is incident with a $6^+$-vertex.
    \item\label{rule-6_pendent} If $v$ is a $6^+$-vertex, then it sends charge $1$ to each pendent face.
\end{enumerate}

\item Let $v$ be a $d$-vertex, where $d\in\{6, \ldots, 10\}$. 
\begin{enumerate}
    \item\label{rule-6_ch1} $v$ sends charge $1$ to each incident $3$-face not in $F_2 \cup F_3$.

    \item\label{rule-6_ch2} $v$ sends charge $2$ to each incident $(F_2 \cup F_3)$-face that is neither terrible nor in $F^*_2$.
    
    \item\label{rule-terrible} $v$ sends charge $3$ to each incident terrible face. 
     
    \item\label{rule-f2_bad} If $v$ is bad, then $v$ sends charge $2$ to each incident $F^*_2$-face.
    
    \item\label{rule-f2_nonbad} If $v$ is not bad, then $v$ sends charge $3$ to each incident $F^*_2$-face.
\end{enumerate}

\item\label{rule-11} Each $11^{+}$-vertex sends charge $3$ to each incident $3$-face.

% \item\label{rule-3face} For $ d \in \{6,7,8\}$, each $3$-face incident with a $3^-$-vertex sends charge $1$ to each incident bad $d$-vertex.

\item\label{rule-bad3face} Each non-terrible $(F_2\cup F_3)$-face sends charge $1$ to each incident $2$-vertex and to each incident bad vertex.

\item\label{rule-7face} Each $7^{+}$-face sends charge $1$ to each incident $2$-vertex on $F_2$.
\end{enumerate}

It remains to check the final charge of each vertex and each face.

\begin{clm}\label{clm:vx-f}
Each vertex $v$ has non-negative final charge. 
\end{clm}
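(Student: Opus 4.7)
My strategy is to verify $\mu^*(v) \geq 0$ for every vertex $v$, case-by-case on $d = \deg(v)$, using the discharging rules R1--R7 together with the reducible configurations [C2]--[C13]. The low- and high-degree cases admit short direct arguments; the bulk of the work, and the main obstacle, is the medium range $6 \leq d \leq 10$.

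For $d = 2$, [C2] and [C3] force both neighbors of $v$ to be $5^+$-vertices. If $v \in W_2$ then R2 supplies $+2$ and $\mu^*(v) = 0$. Otherwise $v$ lies on a $3$-face $f = vu_1u_2 \in F_2$; the second face $f'$ at $v$ cannot be a $4$- or $5$-face (forbidden cycles), nor a $6$-face $vu_1x_1x_2x_3u_2$ since this, together with the edge $u_1u_2$, would give the forbidden $5$-cycle $u_1x_1x_2x_3u_2$, so $f'$ is a $7^+$-face; R6 and R7 each contribute $1$. For $d = 3$ no rule touches $v$, so $\mu^*(v) = 0$. For $d = 4$, R1 sends $1$ per incident $3$-face, and the no-$4$-cycle condition caps these at two. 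For $d = 5$ I denote by $a, W, P_1, P_2$ the numbers of incident $3$-faces, $W_2$-neighbors, pendent $(3, 5^-, 5^-)$-faces, and other pendent faces, and bound the charge sent by $v$ as $a + W + P_1 + \tfrac12 P_2$; combined with $2a + W + P_1 + P_2 \leq 5$, the reducibles [C6] and [C7] close the critical case $a = 0$, $W + P_1 + P_2 = 5$. For $d \geq 11$, R5 sends $3$ per $3$-face and R2, R3(c) each send $1$ per off-face $3^-$-neighbor; with $a \leq \lfloor d/2 \rfloor$ and $W + P \leq d - 2a$, the total sent is at most $a + d \leq 2d - 6$.

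For $6 \leq d \leq 10$ I let $(t, s, r_1, r_2)$ count the incident $3$-faces that are terrible, in $F_2^*$, regular $(F_2 \cup F_3)$, or outside $F_2 \cup F_3$, with $a = t + s + r_1 + r_2$. When $v$ is bad, the definition pins down $t = d - 5$, $s + r_1 = 1$, $r_2 = 0$, and at most $8 - d$ off-face neighbors, all $3^-$; summing R4(b)--(d) (outflow $3(d-5) + 2$), R6 (inflow $1$ to the unique non-terrible $(F_2 \cup F_3)$-face at $v$), and R2, R3(c) (outflow $\leq 8 - d$) exactly balances $\mu(v) = 2d - 6$. When $v$ is non-bad, the same accounting reduces the goal to $t + s - r_2 \leq d - 6$, and I plan a case analysis on $(t, s, r_1, r_2)$ using [C11] to bound $t$, [C12] to force a $4^+$-neighbor off-face when $t$ is near-maximal, [C10] to handle $d = 6$ with $a = 3$, and [C13] to eliminate the pattern ``one $F_2^*$-face plus $d - 6$ other terrible-or-$F_2^*$ faces'' for $d \geq 7$. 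A key structural input (for $d = 6$) is that every $F_2^*$-face at a non-bad $6$-vertex $v$ must contain a bad third vertex, since the alternative $(2, 5, 6)$-face is killed by [C4]; combined with [C9] this constrains how $F_2^*$-faces can cluster around $v$.

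I expect the thorniest subcase to be $d = 6$, since [C13] does not cover it and the naive outflow $3s$ from $F_2^*$-faces can exceed $\mu(v) = 6$. The plan there is to exploit the cyclic structure around $v$: each $F_2^*$-face contributes both a $2$-neighbor and a bad neighbor of $v$, and the face between any two consecutive $3$-faces at $v$ must be a $7^+$-face (forbidden $4$-, $5$-, and $6$-faces via a short cycle check), which should yield either a direct structural contradiction to the assumed configuration or sufficient rigidity to tighten the off-face contribution $W + P$ and recover $t + s \leq r_2$. Routine tabulation over $d \in \{6, \ldots, 10\}$ then completes the proof.
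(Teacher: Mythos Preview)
Your overall architecture matches the paper's proof closely: the degree-by-degree case split, the treatment of $d\le 5$ and $d\ge 11$, the bad-vertex calculation, and for non-bad $v$ with $6\le d\le 10$ the reduction to the inequality $t+s-r_2\le d-6$ all agree with the paper (your reduction is in fact a clean way to phrase what the paper does implicitly). The handling of $d\in\{7,\dots,10\}$ via [C11], [C12], [C13] is also essentially the paper's argument.

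There is, however, a genuine gap in your $d=6$ non-bad subcase with $s\ge 1$. You correctly deduce from [C4] that the third vertex $w$ on any $F_2^*$-face at $v$ must be bad, but then you reach for [C9] and a ``cyclic structure'' argument. Neither works: [C9] forbids two \emph{bad} vertices on the same $(F_2\cup F_3)$-face, and since $v$ is non-bad it says nothing about the face $vxw$; and your $7^+$-face observation between consecutive $3$-faces does not constrain $r_2$ or the off-face contribution. Concretely, a $6$-vertex $v$ on a single $F_2^*$-face (bad third vertex $w$) with four off-face $3^-$-neighbors would send $3+4=7>6=\mu(v)$, and nothing in your plan excludes it.

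The missing ingredient is [C8]. Once you know $w$ is bad, recall that a bad vertex has at most one $4^+$-neighbor, and that neighbor must lie on $w$'s unique non-terrible $(F_2\cup F_3)$-face, which here is precisely the $F_2^*$-face $vxw$. Hence $w$'s only $4^+$-neighbor is $v$, a $6$-vertex, so all neighbors of $w$ are $6^-$-vertices, contradicting [C8]. This gives $s=0$ outright for $d=6$, and your ``thorniest subcase'' evaporates; the remainder then follows exactly along the lines you already sketched using [C10], [C11], and [C12]. This is the argument the paper uses (it cites [C4] and [C8] together for $d=6$).
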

\begin{proof}
Note that a vertex $v$ is on at most $\lfloor \frac{\deg(v)}{2} \rfloor$ $3$-faces since $G$ has no $4$-cycles.
By \ref{rc-1vx}, each vertex of $G$ is a $2^+$-vertex.

Suppose that  $v$ is a $2$-vertex.
By \ref{rc-2vx4}, $v$ has two $5^+$-neighbors.
If $v \in W_2$, then $\mu^*(v)=(-2)+1\cdot 2 =0$ by \ref{rule-good2}. 
If $v \notin W_2$, then since $G$ has no $5$-cycles, $v$ is on both a non-terrible $3$-face and a $7^+$-face by~\ref{rc-2vx4} and~\ref{rc-bad2face256}, so $\mu^*(v)=(-2)+1\cdot2 = 0$ by \ref{rule-bad3face} and \ref{rule-7face}.
If $v$ is a $3$-vertex, then $v$ is not involved in the discharging rules,  so $\mu^*(v)=\mu(v)=0$.
If $v$ is a $4$-vertex, then since $v$ is on at most two $3$-faces, $\mu^*(v)\ge 2-1\cdot 2 = 0$ by \ref{rule-45}.

Suppose that  $v$ is a $5$-vertex, so $v$ is on at most two $3$-faces.
By \ref{rc-5vxa} and \ref{rc-5vxb}, $v$ does not send charge $1$ five times, so either $\mu^*(v) \ge 4-1\cdot 4=0$ or $\mu^*(v) \ge 4- 1\cdot 3 - \frac{1}{2} \cdot 2 = 0$ by~\ref{rule-45}, \ref{rule-good2}, \ref{rule-5_ch1}, and \ref{rule-5_ch1/2}.

Suppose that  $v$ is a $d$-vertex for $d\in\{6,\ldots,10\}$.
If $v$ is a bad vertex, then
$\mu^*(v)\ge (2d-6)- 3 \cdot (d-5) -2 \cdot 1-1 \cdot (d-2(d-4))+1 \cdot 1 = 0$
by \ref{rule-good2}, \ref{rule-6_pendent}, \ref{rule-6_ch2}, \ref{rule-terrible}, \ref{rule-f2_bad}, and \ref{rule-bad3face}.
Note that the non-terrible face in $F_2\cup F_3$ that is incident with $v$ sends charge $1$ to $v$ by~\ref{rule-bad3face}. 

Suppose that  $v$ is not a bad vertex.
If $v$ is on at most $(d-6)$ $3$-faces that are either terrible or $F^*_2$-faces, then
$\mu^*(v) \ge (2d-6) - 3\cdot (d-6) - 1 \cdot (d-2(d-6))=0$ by \ref{rule-good2}, \ref{rule-6_pendent}, \ref{rule-6_ch1}, \ref{rule-6_ch2}, \ref{rule-terrible}, and \ref{rule-f2_nonbad}.
Suppose that  $v$ is on at least $(d-5)$ $3$-faces that are either terrible or $F^*_2$-faces.
By~\ref{rc-bad2face256}, \ref{rc-bad2face257a} when $d=6$, and by~\ref{rc-mediumT3H3} when $d\in\{7, \ldots, 10\}$, $v$ is not on an $F^*_2$-face. 
By \ref{rc-mediumd-4T3}, $v$ is on exactly $(d-5)$ terrible faces.
By \ref{rc-mediumd-5T3-6}, we conclude that $v$ has a $4^+$-neighbor $u$.
Note that $v$ is on at most one non-terrible $3$-face since $d-2(d-5)\leq 3$ when $d\in\{7, \ldots, 10\}$ and by~\ref{rc-mediumd-3T3} when $d=6$.

If $uv$ is not on a $3$-face, then $\mu^*(v) \ge (2d-6) - 3\cdot (d-5) -1\cdot(d-1-2(d-5)) = 0$
by \ref{rule-good2}, \ref{rule-6_pendent}, \ref{rule-6_ch1}, \ref{rule-6_ch2}, and \ref{rule-terrible}.
If $uv$ is on a $3$-face $f$, then 
since $v$ is not a bad vertex, $f$ is a $(4^+,4^+,4^+)$-face. 
Thus $\mu^*(v) \ge (2d-6) - 3\cdot (d-5) -1\cdot 1-1\cdot(d-2(d-4)) = 0$ by \ref{rule-good2}, \ref{rule-6_pendent}, \ref{rule-6_ch1}, and \ref{rule-terrible}.

If $v$ is a $d$-vertex for  $d \geq 11$, then $\mu^*(v) \ge (2d-6)- 3 \cdot \lfloor \frac{d}{2} \rfloor -1\cdot (d-2 \lfloor \frac{d}{2} \rfloor)=  \left\lceil \frac{d}{2}\right\rceil-6\ge 0$ by~\ref{rule-good2}, \ref{rule-6_pendent}, and \ref{rule-11}.
\end{proof}

\begin{clm}\label{clm:face-f}
Each face $f$ has non-negative final charge. 
\end{clm}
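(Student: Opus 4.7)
The plan is a case analysis on $\deg(f)$. Since $G \in \PS$ forbids $4$- and $5$-cycles, the only face sizes are $3$, $6$, and $k \geq 7$. A $6$-face is touched by no discharging rule, so $\mu^*(f) = \mu(f) = 0$. A $k$-face with $k \geq 7$ only sends charge via R7; by reducible configuration C1, at most $k-6$ of its incident $2$-vertices lie in $F_2$, so $\mu^*(f) \geq (k-6) - (k-6) = 0$.

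The substantive work is for $3$-faces, which I will partition into four classes: (a) faces with no $3^-$-vertex; (b) terrible faces; (c) non-terrible $F_3$-faces; (d) $F_2$-faces. Note that C3 forbids a $2$-vertex adjacent to a $3$-vertex, so $F_2 \cap F_3 = \emptyset$ and the classes are disjoint. Class (a) is immediate since R1, R4a, and R5 each contribute at least $1$ per incident vertex. Class (b) is a $(3,3,d)$-face with $d \geq 6$ (forced by C5), so R4c or R5 delivers $3$ from the $d$-vertex. In class (c) I further split: on a $(3,3,d)$-face both pendent neighbors of the two $3$-vertices are $5^+$ by non-terribility and contribute via R3a/R3b/R3c, while the $d$-vertex contributes via R1, R4b, or R5; on a $(3,a,b)$-face with $\min(a,b) \geq 4$, the two $4^+$ vertices supply enough, invoking C5 to force a $5^+$ pendent whenever the face is of $(3,5^-,5^-)$-type so that the pendent contribution fills the gap. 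In class (d), C3 forces $a, b \geq 5$ and C4 rules out $(2,5,6^-)$, leaving $(2,5,7^+)$- and $(2,6^+,6^+)$-faces; here R1, R4b, R4d, R4e, R5 on the vertex side together with R6 on the face side determine the balance.

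The main obstacle is handling an incident bad vertex, because a bad vertex sends only $2$ (not $3$) to its non-terrible $(F_2 \cup F_3)$-face while the face owes $1$ back via R6. I will leverage C8 to exclude the dangerous cases: on a $(2,5,7^+)$-face, a bad $7$- or $8$-vertex would have all its neighbors of degree at most $5$ (the $2$- and $5$-vertex on the face; the $2(d-5)$ $3$-vertices on its terrible faces; and the remaining $(8-d)$ neighbors, which are $3^-$ by the definition of bad), violating C8; on a $(3,3,d)$-face with $d \in \{6,7,8\}$, a bad $d$-vertex has all neighbors $3^-$, again violating C8; and on a $(3,a,b)$-face with $\max(a,b) \leq 6$, any bad vertex would have all neighbors $6^-$. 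Once bad vertices are excluded from these deficit-causing positions, C9 caps the number of bad vertices on any remaining $(F_2 \cup F_3)$-face at one, and the arithmetic closes, for example $-3+2+3-1-1 = 0$ for a $(2,6^+,6^+)$-face in $F^*_2$ with a unique bad vertex.
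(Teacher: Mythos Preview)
Your plan is essentially the paper's proof, reorganized: you split off $F_2$ and $F_3$ first and collect the bad-vertex bookkeeping at the end, whereas the paper peels off terrible faces and $F^*_2$-faces up front and then handles the remaining $3$-faces by the degree of $u_1$. The ingredients are the same.

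There is one genuine gap in your enumeration of ``deficit-causing positions.'' You exclude bad vertices from a $(3,a,b)$-face only when $\max(a,b)\le 6$, but you also need to exclude the case $4\le a\le 6$, $b\in\{7,8\}$, with $b$ bad. There the arithmetic does \emph{not} close: the $a$-vertex sends only $1$ by R1 (when $a\in\{4,5\}$), the bad $b$ sends $2$ by R4b, and $f$ must return $1$ to $b$ via R6, yielding $-3+1+2-1=-1$. The fix is the very same C8 argument you already use: every neighbor of a bad vertex that is not on its non-terrible $(F_2\cup F_3)$-face has degree at most $3$, so the required $7^+$-neighbor must be the \emph{other} vertex among $\{a,b\}$ on that face. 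Hence whenever a bad vertex sits on a $(3,a,b)$-face with $a\ge 4$, the non-bad one of $\{a,b\}$ is a $7^+$-vertex and both send at least $2$, giving $-3+2+2-1=0$. This is exactly the paper's line ``if $f$ is incident with a bad vertex, then by [C8], $u_2$ is a $7^+$-vertex.'' A second, minor point: your $(3,3,d)$ line tacitly assumes $d\ge 4$; when $d=3$ the third vertex contributes nothing directly, but its pendent $5^+$-neighbor (forced by C5) supplies the missing $1$ via R3a or R3c.
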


\begin{proof}
If $f$ is a $7^+$-face, then by \ref{rc-7-face}, $f$ is incident with at most $(\deg(f)-6)$ $2$-vertices that are on $F_2$, so $\mu^{*}(f) \geq (\deg(f)-6) - 1\cdot(\deg(f)-6) = 0$ by \ref{rule-7face}.
A $6$-face $f$ is not involved in the discharging rules, so $\mu^*(f) = \mu(f) = 0$.
Note that $G$ has neither $4$-faces nor $5$-faces.

Suppose $f$ is a $3$-face $u_1u_2u_3$, where $\deg(u_1)\leq \deg(u_2)\leq \deg(u_3)$.
If $f$ is terrible, then by \ref{rc-W3pen4}, $f$ is incident with a $6^+$-vertex, which sends charge $3$ to $f$ by~\ref{rule-terrible} and~\ref{rule-11}, so $\mu^*(f)\geq (-3)+3=0$. 
Suppose $f$ is an $F^*_2$-face.  
By \ref{rc-bad2face257a} and \ref{rc-bad2face257b}, there is exactly one vertex $w$ on $f$ that is either a $5$-vertex or a bad vertex.
If $w$ is a $5$-vertex, then $\mu^{*}(f) \geq (-3) + 1 + 3 - 1 \cdot 1= 0$
by~\ref{rule-45},~\ref{rule-f2_nonbad}, and~\ref{rule-bad3face}.
If $w$ is bad, then $\mu^{*}(f) \geq (-3) + 2 + 3 -1 \cdot 2 = 0$ by~\ref{rule-f2_bad},~\ref{rule-f2_nonbad}, and~\ref{rule-bad3face}.
Now assume $f$ is neither terrible nor in $F^*_2$. 

If $u_1$ is a $4^+$-vertex, then $f$ is a $(4^+,4^+,4^+)$-face.
Each vertex on $f$ sends charge at least $1$ to $f$ by \ref{rule-45}, \ref{rule-6_ch1}, and \ref{rule-11}, so  $\mu^*(f) \ge (-3)+1 \cdot 3 = 0$. 

If $u_1$ is a $2$-vertex, then $f$ is not incident with a $5$-vertex or a bad vertex since $f\not\in F^*_2$. 
Also, by \ref{rc-2vx4}, $f$ is a $(2,6^+,6^+)$-face.
Each $6^+$-vertex on $f$ sends charge at least $2$ to $f$ by \ref{rule-6_ch2} and~\ref{rule-11}, and $f$ sends charge $1$ to $u_1$ by \ref{rule-bad3face}, so $\mu^{*}(f) \geq (-3) + 2 \cdot 2 -1\cdot 1= 0$.
 
Suppose that  $u_1$ is a $3$-vertex.
If $u_3$ is a $5^-$-vertex, then by \ref{rc-W3pen4}, each $3$-vertex on $f$ has a pendent $5^+$-neighbor, so $\mu^*(f)\ge (-3) + 1 \cdot 3=0$ by~\ref{rule-45},~\ref{rule-5_ch1}, and~\ref{rule-6_pendent}. 
So now assume $u_3$ is a $6^+$-vertex.

Suppose that  $u_2$ is a $3$-vertex, so $f$ is a $(3,3,6^+)$-face.
Since $f$ is not terrible, each of $u_1$ and $u_2$ has a pendent $5^+$-neighbor.
Moreover, by~\ref{rc-bad2face257a}, $u_3$ cannot be a bad vertex. 
Each pendent neighbor of $f$ sends charge at least $\frac{1}{2}$ to $f$ by~\ref{rule-pendent}, so $\mu^*(f)\geq (-3)+2 \cdot 1 +\frac{1}{2}\cdot 2=0$ by~\ref{rule-pendent},~\ref{rule-6_ch2},~and~\ref{rule-11}. 

Now suppose that  $u_2$ is a $4^+$-vertex, so $f$ is a $(3, 4^+, 6^+)$-face.
Then by \ref{rc-bad2face257a} and \ref{rc-bad2face257b}, there is at most one vertex $w$ on $f$ that is either a $5$-vertex or a bad vertex.

If $f$ is not incident with a bad vertex,
then $\mu^*(f) \ge (-3)+1\cdot1 +2\cdot1=0$ by~\ref{rule-45}, \ref{rule-6_ch2}, and \ref{rule-11}.
If $f$ is incident with a bad vertex, then by \ref{rc-bad2face257a}, $u_2$ is a $7^+$-vertex, so $\mu^*(f) \ge (-3)+2 \cdot 2-1\cdot1=0$ by \ref{rule-6_ch2}, \ref{rule-11}
and \ref{rule-bad3face}.
\end{proof}

By Claims~\ref{clm:vx-f} and~\ref{clm:face-f}, each vertex and each face has non-negative final charge.
Hence, the final charge sum is non-negative.

%%%%%%%%%%%%%%%%%%%%%%%%%%%%%%%%%%%%%%%%%%%%%%%%%%%%%%%
%%%%%%%%%%%%%%%%%%%%%%%%%%%%%%%%%%%%%%%%%%%%%%%%%%%%%%%
%%%%%%%%%%%%%%% REDUCIBLE CONFIGURATIONS %%%%%%%%%%%%%%
%%%%%%%%%%%%%%%%%%%%%%%%%%%%%%%%%%%%%%%%%%%%%%%%%%%%%%%
%%%%%%%%%%%%%%%%%%%%%%%%%%%%%%%%%%%%%%%%%%%%%%%%%%%%%%%

\section{Proofs for Reducible Configurations}\label{sec:rc-proofs}

In this section, we present the proofs of the reducible configurations. 
Recall that $G$ is a planar graph without $4$-cycles and $5$-cycles that does not have an \Ftf, but all its proper subgraphs have an \Ftf.

If $(A_3, A_4)$ is an $(\F_{3},\F_{4})$-partition of a proper induced  subgraph of a graph $G$, then $(A_3, A_4)$ is a {\it partial $(\F_{3},\F_{4})$-partition} of $G$.
Given a (partial) $(\F_{3},\F_{4})$-partition $(A_3, A_4)$ of a graph $G$ and a vertex $v$ of $G$,
a neighbor of $v$ in $A_i\cap N_G(v)$ is an {\it $A_i$-neighbor} of $v$, 
and we say $v$ is {\it $A_i$-saturated} if $v$ has $i$ $A_i$-neighbors.
%If  $(A_1, A_2)$ is a $(\G_{1},\G_{2})$-partition of an proper induced  subgraph of a graph $G$, then $(A_1, A_2)$ is a {\it partial $(\G_{1},\G_{2})$-partition} of $G$.
%partial partition...???
%Given a (partial) $(\G_{1},\G_{2})$-partition  $(A_1, A_2)$ of a graph $G$, where $\G_i\in \{\Delta_{d_i}, \F_{d_i}\}$, 
%a vertex $v$ %\red{(E: Will we need $v \in A_i$?)} 
%is {\it $A_i$-saturated} if $v$ has $d_i$ neighbors in $A_i$.

For  $S\subset V(G)$, let $G-S$ denote the graph obtained from $G$ by deleting the vertices in $S$.
If $S=\{x\}$, then denote $G-S$ by $G-x$.
Likewise, in order to improve readability, we often drop the braces and commas to denote a set and use `$+$' for the set operation `$\cup$'.
For instance, given $A,B \subset V(G)$ and $x,y,z\in V(G)$, we use $A+x-y$, $A-z+xy$, and $A+B$ to denote $(A\cup\{x\})\setminus\{y\}$, $(A\setminus\{z\})\cup\{x,y\}$, and $A \cup B$, respectively.

\begin{lem}\label{lem:coloring}
If $(A_3,A_4)$ is an {\Ftf} of $G-v$, then the following holds:
\begin{itemize}
    \item[\rm(i)] The neighbors of $v$ cannot all be in the same part.
    \item[\rm(ii)] For $i \in \{3,4\}$, if $u$ is the only $A_i$-neighbor of $v$, then $u$ is $A_i$-saturated and has an $A_{7-i}$-neighbor. 
    That is, $u$ is an $(i + 2)^+$-vertex.
    \item[\rm(iii)]  For  $i \in \{3,4\}$, if an $(i + 2)$-vertex $u$ is the only $A_i$-neighbor of $v$, then $u$ has an $A_{7-i}$-saturated neighbor in $A_{7-i}$.
\end{itemize}
\end{lem}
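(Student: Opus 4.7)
The plan is to exploit the minimality of $G$: since $G-v$ has an \Ftf $(A_3,A_4)$, in each part it suffices to exhibit an extension of this partition to $G$ in order to reach a contradiction. All three statements will be proved contrapositively, by constructing such an extension via a small local modification of $(A_3,A_4)$. In each modification below, the single relocated or newly placed vertex acquires at most one neighbor in its new class, so the forest condition will be automatic and only the maximum-degree bound will need to be checked.

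For (i), if every neighbor of $v$ lies in $A_i$ for some $i\in\{3,4\}$, then placing $v$ into $A_{7-i}$ works: $v$ becomes an isolated vertex of $G[A_{7-i}+v]$, so this side remains a forest of maximum degree at most $7-i$, while $A_i$ is untouched. This directly yields an \Ftf of $G$, contradicting the choice of $G$.

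For (ii), suppose $u$ is the unique $A_i$-neighbor of $v$. If $u$ is not $A_i$-saturated, moving $v$ into $A_i$ merely attaches the pendant edge $uv$ to $G[A_i]$, so this class stays acyclic and every degree stays within the allowed bound. If instead $u$ has no $A_{7-i}$-neighbor, then $u$ can first be relocated to $A_{7-i}$, where it becomes isolated, after which $v$ has no $A_i$-neighbors and can be placed in $A_i$ safely. Either way an extension to $G$ exists, so $u$ must be $A_i$-saturated and must have at least one $A_{7-i}$-neighbor; together with the neighbor $v$ this gives $\deg(u)\ge i+2$.

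For (iii), the degree count in (ii) is tight: $u$ has exactly $i$ $A_i$-neighbors and exactly one $A_{7-i}$-neighbor, call it $w$, in addition to $v$. If $w$ is not $A_{7-i}$-saturated, then the swap ``move $u$ from $A_i$ into $A_{7-i}$ and place $v$ into $A_i$'' provides an extension: $u$ attaches to $G[A_{7-i}+u]$ only through the pendant edge $uw$, so this side stays a forest and $w$'s degree remains within its bound, while $v$ loses its only $A_i$-neighbor and joins $A_i$ as an isolated vertex. This contradicts the choice of $G$, so $w$ must be $A_{7-i}$-saturated. I do not anticipate a deeper obstacle: all three parts reduce to routine ``swap and extend'' bookkeeping enabled by minimality, with the pendant structure of each move ensuring that no swap creates a cycle.
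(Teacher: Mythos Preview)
Your proof is correct and follows essentially the same approach as the paper's: each part is handled by the obvious local extension or swap (place $v$ in the empty side; place $v$ in $A_i$; move $u$ to $A_{7-i}$ and place $v$ in $A_i$), and the failure of each move yields exactly the claimed structural conclusion. Your write-up is in fact slightly more explicit than the paper's in justifying why each modification preserves acyclicity via the pendant-edge observation.
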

\begin{proof}
% Let $H=H(d_1, d_2)$. 
Let $v$ be a vertex of $G$. 
By the minimality of $G$, there exists an {\Ftf} $(A_3, A_4)$ of $G-v$.
If every neighbor of $v$ is an $A_i$-neighbor 
%$N_G(v)\subset A_i$ 
for some $i\in\{3,4\}$, then putting $v$ in $A_{7-i}$ gives an {\Ftf} of $G$, which is a contradiction, so (i) holds.

Let $u$ be the only $A_i$-neighbor of $v$ for some $i\in\{3, 4\}$. 
Since adding $v$ to $A_i$ is not an {\Ftf} of $G$, this implies that $u$ is $A_i$-saturated.
Since moving $u$ from $A_i$ to $A_{7-i}$ and adding $v$ to $A_i$ is not an {\Ftf} of $G$, $u$ also has an $A_{7-i}$-neighbor $w$. 
Thus (ii) holds.
Moreover, if $u$ is an $(i+2)$-vertex, then $w$ is the only $A_{7-i}$-neighbor of $u$, so $w$ must be $A_{7-i}$-saturated, hence (iii) holds.
\end{proof}

\begin{lem}\label{lem:rc-1vx}
In the graph $G$, the following holds:
\begin{itemize}
\item[\rm(i)] There is no $7^+$-face $f$ incident with $(\deg(f)-5)$ $2$-vertices on $F_2$.
{\rm\ref{rc-7-face}}
\item[\rm(ii)] There is no $1^-$-vertex. {\rm{\ref{rc-1vx}}}
\item[\rm(iii)] There is no $2$-vertex with a $4^-$-neighbor. {\rm{\ref{rc-2vx4}}}
\item[\rm(iv)] There is no $(2,5,6^-)$-face.
{\rm\ref{rc-bad2face256}}
\item[\rm(v)] There is no  $(3,5^-,5^-)$-face with a pendent $4^-$-neighbor. {\rm\ref{rc-W3pen4}}
\end{itemize}
\end{lem}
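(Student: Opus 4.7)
In each part the strategy is the same: assuming the forbidden configuration appears in $G$, delete one or a few vertices of the configuration, invoke minimality to obtain an $(\F_3,\F_4)$-partition $(A_3,A_4)$ of the resulting smaller graph, and then extend this partition back to $G$, possibly after a local exchange between the two classes. Lemma~\ref{lem:coloring} identifies exactly when direct extension is blocked and what structural information that blockage gives.

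Parts (ii) and (iii) are short. For (ii), any $1^-$-vertex $v$ is placed as an isolated vertex in the class not containing its (unique) neighbor. For (iii), let $v$ be a $2$-vertex with a $4^-$-neighbor $u_1$ and other neighbor $u_2$, and fix a partition of $G-v$. If $u_1,u_2$ lie in the same class, place $v$ opposite; otherwise say $u_1\in A_3,\,u_2\in A_4$. Placing $v\in A_3$ fails only when $u_1$ is $A_3$-saturated, which since $\deg_{G-v}(u_1)\le 3$ forces every $G-v$-neighbor of $u_1$ into $A_3$. Moving $u_1$ to $A_4$ and placing $v\in A_3$ then leaves $u_1$ isolated in $A_4$ and $v$ isolated in $A_3$, so both classes remain forests of the required maximum degree. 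Part (i) is handled by deleting all $(\deg(f)-5)$ offending $2$-vertices at once and reinserting them one at a time; by (iii) each has two $5^+$-neighbors, so Lemma~\ref{lem:coloring}-style reasoning provides ample placement flexibility.

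The core of the lemma is in (iv). Let $u_1u_2u_3$ be a $(2,5,6^-)$-face with $\deg(u_1)=2$, $\deg(u_2)=5$, and, by (iii), $\deg(u_3)\in\{5,6\}$. Fix a partition of $G-u_1$. Lemma~\ref{lem:coloring}(i) forces $u_2,u_3$ into opposite parts, say $u_2\in A_3$ and $u_3\in A_4$. Lemma~\ref{lem:coloring}(ii) applied to $u_2$ says that $u_2$ is $A_3$-saturated with unique $A_4$-neighbor $u_3$; applied to $u_3$ it says that $u_3$ is $A_4$-saturated with at least one $A_3$-neighbor. Since $\deg(u_3)\le 6$, the latter forces $\deg(u_3)=6$, and the unique $A_3$-neighbor of $u_3$ must be $u_2$. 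Lemma~\ref{lem:coloring}(iii), applied to the $5$-vertex $u_2$ and the $6$-vertex $u_3$, then reconfirms that $u_3$ is $A_4$-saturated and $u_2$ is $A_3$-saturated. The plan is to swap $u_2$ and $u_3$ between the two parts and set $u_1\in A_3$. Because $u_2u_3$ is an edge, after the swap $u_2$ has no $A_4$-neighbors (its only former $A_4$-neighbor $u_3$ has moved out) and $u_3$ has no $A_3$-neighbors apart from the newly placed $u_1$; all other neighbors of $u_2,u_3$ merely lose an $A$-degree. Both parts remain forests (the swapped vertices become leaves or isolated), and the degree bounds are preserved.

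Part (v) follows the same pattern. A case split on how $u_2,u_3,w$ are distributed into the two parts, combined with the bounds $\deg(u_2),\deg(u_3)\le 5$ and $\deg(w)\le 4$, either yields a direct extension or reveals a Lemma~\ref{lem:coloring} saturation contradiction; the pendent vertex $w$ provides an extra slot, so when a swap is required it should be less delicate than in (iv). The main obstacle throughout is (iv), where the swap must simultaneously preserve acyclicity and both degree bounds. The saturation analysis from Lemma~\ref{lem:coloring} is precisely what makes this possible: it forces the tight structure in which the proposed swap on $u_2,u_3$ collapses all their cross-class contributions to zero, rendering the verification straightforward.
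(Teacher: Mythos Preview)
Your arguments for (ii)--(iv) are essentially the paper's: (ii) and (iii) are immediate from Lemma~\ref{lem:coloring}, and your swap in (iv) is exactly the paper's move $(A_3+u_1u_3-u_2,\,A_4+u_2-u_3)$. Your sketch for (v) points in the right direction, though the paper's execution is more specific than a bare case split: applying Lemma~\ref{lem:coloring}(ii) to the deleted $3$-vertex $x$ forces exactly one $A_3$-neighbor $w$, which must be a $5$-vertex on the triangle; then Lemma~\ref{lem:coloring}(iii) says $w$'s unique $A_4$-neighbor---necessarily the \emph{other} triangle vertex---is $A_4$-saturated, impossible for a $5^-$-vertex that already has $x$ deleted and $w$ in $A_3$.

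Part (i), however, is a genuine gap. Your plan to delete the $2$-vertices and reinsert cannot be completed: a $2$-vertex on an $F_2$-face can have neighbors of arbitrarily large degree, so if its $A_3$-neighbor is $A_3$-saturated and its $A_4$-neighbor is $A_4$-saturated, no swap analogous to (iv) restores the degree bounds (the swap in (iv) worked only because the degrees were exactly $5$ and $6$). Lemma~\ref{lem:coloring} simply gives no purchase here. The paper's proof of (i) uses no minimality at all; it is a one-line structural observation about the embedding. Each such $2$-vertex $v_i$ on $f$ sits on a triangle $a_iv_ib_i$ with $a_i,b_i$ consecutive on $\partial f$, and the third side $a_ib_i$ is an edge of $G$. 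Replacing each boundary path $a_iv_ib_i$ by the chord $a_ib_i$ turns $\partial f$ into a closed walk of length $\deg(f)-(\deg(f)-5)=5$ in $G$, i.e.\ a $5$-face, contradicting $G\in\PS$. So (i) is a fact about planar graphs without $5$-cycles, not a reducibility argument.
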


\begin{proof}
If $G$ has a $7^+$-face $f$ incident with $(\deg(f)-5)$ $2$-vertices on $F_2$, then this implies $G$ has a $5$-face, which is a contradiction. 
Thus (i) holds. 
(ii) immediately follows from Lemma~\ref{lem:coloring} (i), and
(iii) immediately follows from Lemma~\ref{lem:coloring} (i) and (ii).

To show (iv), let $xyz$ be a $3$-face, where $x$ is a $2$-vertex.
By Lemma~\ref{lem:coloring} (i) and (ii), we may assume $y$ and $z$ are a $5^+$-vertex and a $6^+$-vertex, respectively. 
Suppose to the contrary that $xyz$ is a $(2, 5, 6)$-face. 
By the minimality of $G$, there exists an {\Ftf} $(A_3,A_4)$ of $G-x$.  
By Lemma~\ref{lem:coloring}, $y\in A_3$, $z\in A_4$, $y$ is $A_3$-saturated, and $z$ is $A_4$-saturated.
Now, $(A_3+xz-y,A_4+y-z)$ is an {\Ftf} of $G$, which is a contradiction. 

To show (v), let $xyz$ be a $(3,5^-,5^-)$-face, where $x$ is a $3$-vertex. Suppose to the contrary that the pendent neighbor $x'$ of $x$ is a $4^-$-vertex. 
By the minimality of $G$, there exists an {\Ftf} $(A_3,A_4)$ of $G-x$.
Since no neighbor of $x$ is a $6^+$-vertex, by Lemma~\ref{lem:coloring}, there is exactly one $A_3$-neighbor $w$ of $x$ that is $A_3$-saturated and has a $A_4$-saturated neighbor; this is impossible since $x'$ is a $4^-$-vertex, and $y$ and $z$ are $5^-$-vertices.
\end{proof}

\begin{lem}\label{lem:5-vertex}
In the graph $G$, the following holds:
\begin{itemize}
\item[\rm(i)] There is no $5$-vertex with only $3^-$-neighbors, where one of them is in $W_2$. {\rm\ref{rc-5vxa}}
\item[\rm(ii)] There is no $5$-vertex with five pendent $3$-faces, where 
 four of them are $(3,5^-,5^-)$-faces.
    {\rm\ref{rc-5vxb}}
\end{itemize}    
\end{lem}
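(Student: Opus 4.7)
For part (i), the plan is to delete $u$ rather than $v$ and extend the resulting partition. Let $u'$ denote the other neighbor of $u$; since $u \in W_2$, we have $v \not\sim u'$ (otherwise $vuu'$ would be a $3$-face containing $u$), which will be crucial. By minimality of $G$, take an \Ftf\ $(A_3, A_4)$ of $G - u$. If $v$ and $u'$ lie in the same class, place $u$ in the other class, where $u$ has degree $0$. Otherwise, say $v \in A_3$ and $u' \in A_4$: placing $u$ in $A_3$ is valid iff $v$'s $A_3$-degree in $G-u$ is at most $2$, and placing $u$ in $A_4$ is valid iff $u'$'s $A_4$-degree is at most $3$. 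The only failure mode is simultaneous saturation of $v$ and $u'$; but then $v$'s four neighbors in $G-u$ (all $3^-$-vertices) split as three in $A_3$ and one in $A_4$, so moving $v$ to $A_4$ gives $v$ degree $1$ in $A_4$ (a leaf, no new cycle), and since $v \not\sim u'$ the move does not affect $u'$'s degree. Now both of $u$'s neighbors lie in $A_4$, so $u$ enters $A_3$ with degree $0$. The case $v \in A_4,\ u' \in A_3$ is symmetric. In every case we obtain an \Ftf\ of $G$, contradicting minimality.

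For part (ii), let $u_1, \ldots, u_5$ be the $3$-neighbors of $v$ with pendent $3$-face $f_i = u_i p_i q_i$, where for $i \le 4$ both $p_i, q_i$ are $5^-$-vertices. Take an \Ftf\ $(A_3, A_4)$ of $G - v$ by minimality. Since no $u_i$ is a $5^+$- or $6^+$-vertex, Lemma~\ref{lem:coloring} forces $a := |N(v) \cap A_3| \in \{2, 3\}$. Crucially, each $u_i$ has degree $2$ in $G-v$, so no max-degree constraint at any $u_i$ can obstruct re-inserting $v$---only cycle creation can. The plan is to flip one $u_j$'s class (possibly together with a triangle-partner) to either allow $v$ to be extended directly or to reach a partition with $|N(v) \cap A_3| \in \{1, 4\}$, which by Lemma~\ref{lem:coloring}(ii) requires an impossibly high-degree neighbor of $v$, giving a contradiction.

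For $a = 2$, pick $u_j \in A_3$ with $j \le 4$ (exists by pigeonhole). If both $p_j, q_j \in A_4$, then $u_j$ is isolated in $G[A_3]$, so $v$ extends to $A_3$ without creating a cycle. Otherwise, say $p_j \in A_3$ and $q_j \in A_4$: the triangle edges from $q_j$ to $u_j$ and $p_j$ force $q_j$'s $A_3$-degree to be at least $2$, so since $q_j$ is $5^-$, its $A_4$-degree is at most $3$. Flipping $u_j$ to $A_4$ is thus valid (raising $q_j$'s $A_4$-degree to at most $4$ and making $u_j$ a leaf), producing a partition with $|N(v) \cap A_3| = 1$ and contradicting Lemma~\ref{lem:coloring}(ii). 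The case $a = 3$ is handled symmetrically: pick $u_l \in A_4$ with $l \le 4$, and apply the analogous flip of $u_l$ to $A_3$. The catch is that the $A_3$-max-degree is $3$ rather than $4$, so a tight edge case arises when the $A_3$-partner $q_l$ on $f_l$ is a $5$-vertex with $A_3$-degree exactly $3$; this is resolved by a \emph{compound flip} that simultaneously moves $q_l$ from $A_3$ to $A_4$, leaving $p_l$'s $A_4$-degree unchanged (it loses $u_l$ but gains $q_l$), making $u_l$ isolated in $A_3$ and $q_l$ a leaf in $A_4$.

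The main obstacle is the compound-flip verification in the $a=3$ edge case: one must check that the simultaneous reassignment preserves forest structure and max-degree bounds everywhere. The $5^-$ bound on $p_l, q_l$ (bounding $q_l$'s $A_3$-degree above) and the non-adjacency $q_l \not\sim v$ (so flipping $q_l$'s class does not alter $|N(v) \cap A_3|$) are the essential ingredients that close the argument via Lemma~\ref{lem:coloring}(ii).
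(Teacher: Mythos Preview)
Your proof is correct and follows essentially the same strategy as the paper's. For part~(ii) the arguments are nearly identical: both delete $v$, reduce to the case where each part contains at least two neighbors of $v$, and then flip a neighbor $u_j$ (and, in the asymmetric $A_3$ edge case, also its triangle-partner $q_l$) to reach a contradiction; you phrase the endgame via Lemma~\ref{lem:coloring}(ii) on the modified partition, while the paper directly extends $v$ into the freed part, but these are equivalent. For part~(i) your case analysis is sound (the non-adjacency $v \not\sim u'$ and the $3^-$-bound on $v$'s neighbors are exactly what make the move of $v$ safe), though the paper obtains the same contradiction in one line by invoking Lemma~\ref{lem:coloring}(iii): since $v$ is the only $A_3$-neighbor of the deleted $2$-vertex and $\deg(v)=5=3+2$, $v$ must have an $A_4$-saturated neighbor, which is impossible for a $3^-$-vertex.
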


\begin{proof} Let $v$ be a $5$-vertex with neighbors $x_1, \ldots, x_5$. 

To show (i), suppose to the contrary that $x_1\in W_2\cap N_G(v)$, and all other neighbors of $v$ are $3^-$-neighbors.
Let $y_1$ be the neighbor of $x_1$ that is not $v$.
See the left figure in Figure~\ref{fig:(f3,f4)_good2}.
By the minimality of $G$, there exists an {\Ftf} $(A_3,A_4)$ of $G-x_1$.
By Lemma~\ref{lem:coloring}, we know $v \in A_3$, $v$ is $A_3$-saturated, and $v$ has an $A_4$-saturated neighbor.
This is a contradiction since all neighbors of $v$ are $3^-$-vertices.

\begin{figure}[h!]
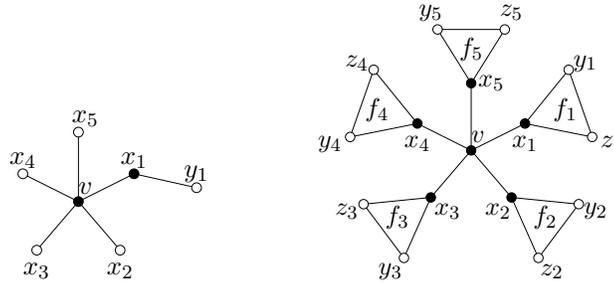

  \centering
  \includegraphics[height=2.8cm,page=8]{fig-partition-combine.pdf} \qquad    \includegraphics[height=4.2cm,page=9]{fig-partition-combine.pdf}  
  \caption{A $5$-vertex $v$ with either a neighbor $x_1\in W_2$ or five pendent faces}
  \label{fig:(f3,f4)_good2}
\end{figure}

To show  (ii), assume that $v$ has five pendent $3$-faces $x_1y_1z_1, \ldots, x_5y_5z_5$. See the right figure in Figure~\ref{fig:(f3,f4)_good2}. Suppose to the contrary that four of those $3$-faces are $(3,5^-,5^-)$-faces.
By the minimality of $G$, there exists an {\Ftf} $(A_3, A_4)$ of $G-v$. 
Note that since every neighbor of $v$ is a $3$-vertex, no neighbor of $v$ is either $A_3$-saturated or $A_4$-saturated. 
Therefore, if there is a part $P$ with at most one neighbor of $v$, then putting $v$ in $P$ gives an {\Ftf} $(A_3, A_4)$ of $G$, which is a contradiction. 
Hence, each of $A_3$ and $A_4$ has at least two neighbors of $v$, and since $v$ is a $5$-vertex, there is a part with exactly two neighbors of $v$. 

Suppose $x_1$ and $x_2$ are the only $A_3$-neighbors of $v$. 
Since $v$ has at least four pendent  $(3,5^-,5^-)$-faces, we may assume  $x_1y_1z_1$ is a $(3,5^-,5^-)$-face.
Putting $v$ in $A_3$ must not be an {\Ftf} of $G$, so we may assume $y_1\in A_3$, and furthermore $z_1\in A_4$. 
Now, since $z_1$ is a $5^-$-vertex, $(A_3+v-x_1, A_4+x_1)$ is an {\Ftf} of $G$, which is a contradiction. 

Suppose $x_1$ and $x_2$ are the only $A_4$-neighbors of $v$. 
Since $v$ has at least four pendent $(3,5^-,5^-)$-faces, we may assume  $x_1y_1z_1$ is a $(3,5^-,5^-)$-face.
Putting $v$ in $A_4$ must not be an {\Ftf} of $G$, so we may assume $y_1\in A_4$, and furthermore $z_1\in A_3$. 
Now, since both $y_1$ and $z_1$ are $5^-$-vertices, either $(A_3+x_1, A_4+v-x_1)$ or $(A_3+x_1-z_1, A_4+v-x_1+z_1)$ is an {\Ftf} of $G$, which is a contradiction. 
\end{proof}

\begin{lem}\label{lem:badNonSat22}
For $d\in\{6, \ldots, 10\}$, let $v$ be a $d$-vertex of $G$ incident with a terrible face $vxy$. Let $x'$ and $y'$ be the pendent neighbors of $x$ and $y$, respectively, where $x'$ is a $4^-$-vertex.
If $(A_3,A_4)$ is an {\Ftf} of $G-x$, then the following holds:
\begin{itemize}
    \item[\rm(i)] If $v$ is on either $(d-4)$ $3$-faces or $(d-5)$ terrible faces, then $x',y, y'\in A_4$, $v\in A_3$, and $v$ is $A_3$-saturated,
\item[\rm(ii)] If $v$ is  on $(d-4)$ $3$-faces, then $p$ and $q$ are in different parts whenever $pqv$ is a $3$-face, and all other neighbors (except $x$) of $v$ are in $A_3$. 
\end{itemize}
\end{lem}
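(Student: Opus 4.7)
The plan is to apply Lemma~\ref{lem:coloring} to $x$ in the partition $(A_3,A_4)$ of $G-x$. The three neighbors of $x$ are $v$, $y$, and $x'$, with $y$ a $3$-vertex and $x'$ a $4^-$-vertex. Since Lemma~\ref{lem:coloring}(ii) requires the unique $A_3$-neighbor of $x$ (if one exists) to be a $5^+$-vertex, and the unique $A_4$-neighbor to be a $6^+$-vertex, neither $y$ nor $x'$ can play that role. Combined with part (i) of that lemma, the distribution of $\{v,y,x'\}$ is forced into exactly one of two cases: \textbf{(C1)} $v\in A_3$, $y,x'\in A_4$, with $v$ being $A_3$-saturated; or \textbf{(C2)} $v\in A_4$, $y,x'\in A_3$, with $v$ being $A_4$-saturated.

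To establish $y'\in A_4$ under (C1), I argue by contradiction. Suppose $y'\in A_3$. Then the only $(G-x)$-neighbors of $y$, namely $v$ and $y'$, both lie in $A_3$, so $y$ is isolated in $G[A_4]$. Placing $x$ in $A_4$ creates no cycle there (the singleton component $\{y\}$ merely merges with the component of $x'$ via $x$), and the degree bound is preserved ($x'$ gains only one $A_4$-neighbor and its resulting degree is at most $\deg(x')\leq 4$). This yields an {\Ftf} of $G$, contradicting minimality, so $y'\in A_4$.

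The harder step is ruling out (C2). The main idea is to swap $v$ from $A_4$ to $A_3$, obtaining $(A_3+v,\, A_4-v)$, and then to place $x$ in $A_4-v$; since all neighbors of $x$ then lie in $A_3+v$, the vertex $x$ is isolated in the new $A_4$-part and the extension to $G$ is immediate. The delicate point is the legality of the swap itself: $v$'s new $A_3$-degree equals $d-5$, which is at most $3$ only when $d\leq 8$, and one must also check that no $A_3$-neighbor of $v$ becomes over-saturated and that no cycle appears in $G[A_3+v]$. Here the hypothesis is essential: the $(d-5)$ terrible faces (or $(d-4)$ $3$-faces) at $v$ guarantee that many of $v$'s $A_3$-neighbors are $3$-vertices adjacent to $v\in A_4$, whose $A_3$-degree is thus at most $2$, and a careful case analysis based on this structure establishes both the saturation and the cycle conditions. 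For $d\in\{9,10\}$, where only the terrible-faces hypothesis is feasible and $d-5>3$, a more delicate argument is needed: one performs a compound swap, simultaneously moving an $A_3$-neighbor of $v$ (chosen to be a $3$-vertex on a terrible face) to $A_4$ while transferring $v$ to $A_3$, with the pendent $4^-$-vertex structure of the chosen terrible face used to verify the compound swap is legal. In all cases the resulting partition of $G-x$ extends to $G$, contradicting minimality.

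For part (ii), assume (C1) and that $v$ is incident with $(d-4)$ $3$-faces, which forces $d\in\{6,7,8\}$. By part (i), $v$ has exactly $3$ $A_3$-neighbors and $d-4$ $A_4$-neighbors in $G-x$. For any $3$-face $vpq$ with $p,q\neq x$, if both $p,q\in A_3$, then $\{v,p,q\}$ forms a triangle in $G[A_3]$, contradicting $A_3\in\F_3$; hence each such $3$-face contributes at least one $A_4$-endpoint. The $v$-neighbors in $G-x$ decompose as $y$ (from the face $vxy$), $2(d-5)$ neighbors on the remaining $d-5$ $3$-faces (all distinct since $G$ has no $4$-cycles), and $8-d$ non-$3$-face neighbors. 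With $y\in A_4$ contributing one $A_4$-endpoint, the total $A_4$-degree $d-4$ of $v$ in $G-x$ equals $1$ plus the $A_4$-endpoint count from the $d-5$ remaining $3$-faces plus the $A_4$-neighbor count from the $8-d$ non-$3$-face neighbors. A direct counting then forces every such $3$-face to have exactly one endpoint in each of $A_3$ and $A_4$, and every non-$3$-face neighbor of $v$ to lie in $A_3$, yielding part (ii). The main obstacle of the proof is the (C2)-ruling-out step, particularly for $d\in\{9,10\}$, where the compound swap must be designed to preserve the forest property, the max-degree condition, and the saturation statuses of multiple vertices simultaneously.
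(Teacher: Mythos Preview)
Your setup via Lemma~\ref{lem:coloring} is correct, your argument for $y'\in A_4$ under (C1) is fine, and your counting proof of part~(ii) is essentially the paper's. The genuine gap is in ruling out (C2).

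First, under the $(d-4)$--$3$-face hypothesis (which forces $d\le 8$), no swap is needed at all. Since $v\in A_j$ and each $3$-face at $v$ contains at most one $A_j$-neighbor of $v$ (else a triangle in $G[A_j]$), and since $y\in A_{7-j}$, the $A_j$-degree of $v$ in $G-x$ is at most $(d-5)+(8-d)=3$. Thus $v$ cannot be $A_4$-saturated and $j=3$ follows immediately. Your assertion that ``many of $v$'s $A_3$-neighbors are $3$-vertices'' is not justified here---only the one face $vxy$ is assumed terrible---but fortunately this case needs no swap.

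Second, under the $(d-5)$--terrible-face hypothesis your simple swap $(A_3+v,\,A_4+x-v)$ does not work even for $d\le 8$. The saturation check is fine, but the forest condition can fail: each $A_3$-neighbor $z$ of $v$ is a $3$-vertex on a terrible face whose only possible $A_3$-neighbor is its pendent $z'$. If two such pendents lie in the same component of $G[A_3]$ (nothing forbids a path of length $\ge 2$ between them, since only $4$- and $5$-cycles are excluded), then adding $v$ creates a cycle through $v$. The paper handles this by a compound swap that \emph{also} moves to $A_4$ the entire set $Z$ of $A_3$-neighbors of $v$ whose pendent lies in $A_3$; after this, every remaining $A_3$-neighbor of $v$ is isolated in $A_3-Z$, so no cycle can appear. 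This is precisely what controls the forest condition, and for $d=10$ with $|Z|=1$ a further separate argument is needed. Your ``careful case analysis'' promissory note does not address this obstruction, and the single-vertex compound swap you propose for $d\in\{9,10\}$ is likewise insufficient in general.
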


\begin{proof} 
Since $x$ has only one $6^+$-neighbor $v$, Lemma~\ref{lem:coloring} (i) and (ii) implies that $v\in A_j$, $y, x'\in A_{7-j}$, and $v$ is $A_j$-saturated for some $j\in\{3, 4\}$.
Moreover, $y' \in A_{7-j}$, since putting $x$ in $A_{7-j}$ (and moving $x'$ from $A_{7-i}$ to $A_i$ if necessary) is not an {\Ftf} of $G$. 
To show (i), it is enough to prove $j=3$. 

Note that each $3$-face incident with $v$ is incident with at most one neighbor  of $v$ in $A_j$.
If $v$ is on $(d-4)$ 3-faces, then $v$ has at most $(d-5)+ (d-(2d-8))=3$ neighbors in $A_j$.
Since $v$ is $A_j$-saturated, this implies $j=3$. 
Moreover, this further implies (ii) holds.

If $v$ is on $(d-5)$ terrible faces, then $v$ has at most $(d-6)+ (d-(2d-10))=4$ neighbors in $A_j$.
Suppose to the contrary that $j=4$, so $v\in A_4$.
Since $v$ is $A_4$-saturated, every $A_3$-neighbor of $v$ is on a terrible face incident with $v$.   
Thus, $v$ has exactly $d-5$ neighbors in $A_3$, and all other neighbors except $x$ are in $A_4$. 

Let $Z$ be the set of $A_3$-neighbors of $v$ such that its pendent neighbor is also in $A_3$. 
Note that $y\in Z$.
If either $d\neq 10$ or $|Z|\geq 2$, then 
$(A_3+v-Z,A_4+Z+x-v)$ is an {\Ftf} of $G$, which is a contradiction.

Therefore, the only remaining case is when $d=10$ and $Z=\{y\}$. 
Let $vx_1y_1$, $vx_2y_2$, $vx_3y_3$, $vx_4y_4$ be the terrible faces incident with $v$ other than $vxy$. 
Let $x'_i$ and $y'_i$ be the pendent neighbors of $x_i$ and $y_i$, respectively. 
Since $Z=\{y\}$, we may assume $y_i\in A_3$ and  $x_i,y'_i$ are $A_4$-neighbors of $v$
 for each $i$.
Moreover, since neither $(A_3+x_i,A_4+x-x_i)$ nor $(A_3+x_i-x'_i,A_4+xx'_i-x_i)$ is an {\Ftf} of $G$, we conclude $x'_i$ is an $A_3$-saturated  
$5^+$-vertex.
Thus $y'_i$ is a $4^-$-vertex for every $i$, so $(A_3+v-y_1y_2y_3y_4,A_4+xy_1y_2y_3y_4-v)$ is an {\Ftf} of $G$, which is a contradiction.
\end{proof}

\begin{lem}\label{lem:rc-bad2face257}
In the graph $G$, the following holds:
\begin{itemize}
\item[\rm(i)] There is no bad vertex with only $6^-$-neighbors. {\rm{\ref{rc-bad2face257a}}}
\item[\rm(ii)] There is no $(F_2 \cup F_3)$-face with two bad vertices. {\rm{\ref{rc-bad2face257b}}}
\item[\rm(iii)] There is no $6$-vertex on three $3$-faces, one of which is a terrible face. {\rm\ref{rc-mediumd-3T3}}
\item[\rm(iv)] For $d \in \{6,\ldots ,10\}$, there is no $d$-vertex on $(d-4)$ terrible faces. {\rm\ref{rc-mediumd-4T3}}  
\item[\rm(v)]  For $d\in\{6, \ldots, 10\}$, there is no $d$-vertex $v$ on $(d-5)$ terrible faces, where $v$ has only $3^-$-neighbors.  {\rm\ref{rc-mediumd-5T3-6}}
\end{itemize}
\end{lem}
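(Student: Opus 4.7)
Each of the five statements is proved by contradiction: assume the forbidden configuration appears in $G$, apply minimality to $G-x$ for a well-chosen $3$-vertex $x$ to obtain an {\Ftf} $(A_3,A_4)$, and derive an {\Ftf} of $G$ by extending or recoloring. The uniform engine is Lemma~\ref{lem:badNonSat22}: whenever the alleged high-degree vertex $v$ is incident with a terrible face $vxy$ (with pendent $4^-$-neighbor $x'$ of $x$), deleting $x$ forces $v\in A_3$ to be $A_3$-saturated and $x',y,y'\in A_4$, together with face-level rigidity on each $3$-face at $v$ via part (ii) whenever $v$ is on exactly $(d-4)$ $3$-faces.

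For part (iii), the $6$-vertex $v$ lies on three $3$-faces $vx_1y_1$ (terrible), $vx_2y_2$, $vx_3y_3$. Deleting $x_1$ forces $v\in A_3$ to be $A_3$-saturated and $y_1\in A_4$ by Lemma~\ref{lem:badNonSat22}(i); the forest property of $A_3$ rules out both $x_i,y_i\in A_3$ for any $i\in\{2,3\}$, so the three $A_3$-neighbors of $v$ must come from $\{x_2,y_2,x_3,y_3\}$ with at most one per face, an immediate count contradiction. For part (iv), the no-$4$-cycle hypothesis forces $d\le 8$, so only $d\in\{6,7,8\}$ is nonvacuous. For $d\in\{7,8\}$ the vertex $v$ is on exactly $(d-4)$ $3$-faces (all terrible), and Lemma~\ref{lem:badNonSat22}(ii) pins the partition across each terrible face; counting $A_3$-neighbors against the pendent constraints either yields a direct extension or identifies a swap of some $y_i$ between $A_3$ and $A_4$ that produces an {\Ftf} of $G$. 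The case $d=6$ is handled by a sub-case distinguishing whether $v$ lies on $2$ or $3$ $3$-faces. For part (v), the hypothesis that every neighbor of $v$ is a $3^-$-vertex together with Lemma~\ref{lem:coloring}(ii) applied to an {\Ftf} of $G-v$ forces each of $A_3,A_4$ to contain at least two neighbors of $v$; moving a single $A_3$-neighbor of $v$ to $A_4$ and placing the deleted $x$ into $A_3$ is safe because the $3^-$-bound preserves the degree-$4$ cap in $A_4$ and any hypothetical new $A_4$-cycle through $v$ would force a forbidden short cycle in $G$.

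Parts (i) and (ii) exploit the structure of a bad $d$-vertex ($d\in\{6,7,8\}$), which hosts $(d-5)$ terrible faces together with one non-terrible $(F_2\cup F_3)$-face, and whose other neighbors are $3^-$-vertices. In part (i), the extra assumption that every neighbor of $v$ is a $6^-$-vertex constrains the two neighbors of $v$ on the non-terrible $(F_2\cup F_3)$-face: after deleting a $3$-vertex on a terrible face and applying Lemma~\ref{lem:badNonSat22}, the $6^-$-bound prevents any neighbor of $v$ from supplying the saturation-plus-opposite-part-neighbor condition of Lemma~\ref{lem:coloring}(ii)--(iii) required to block the extension of $x$, so an {\Ftf} of $G$ results. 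In part (ii), two bad vertices $v_1,v_2$ on the same $(F_2\cup F_3)$-face $v_1v_2w$ display symmetric rigidity: deletion of a $3$-vertex on a terrible face of $v_1$ together with Lemma~\ref{lem:badNonSat22} forces a coloring which, when transported across $v_1v_2w$, violates the saturation structure mandated by $v_2$ being bad.

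The main obstacle is the bookkeeping for parts (iv) and (v) when up to five terrible faces cluster around $v$: each terrible face imposes its own $A_3$-saturation, pendent-neighbor, and face-coloring constraints, and ensuring a recoloring swap does not create a cycle in the receiving forest is delicate. Here the absence of both $4$-cycles and $5$-cycles in $G$ is indispensable, since any alternative $A_4$-path between swapped endpoints must traverse at least six edges and would thereby introduce a forbidden short cycle.
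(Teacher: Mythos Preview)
Your overall setup (delete a $3$-vertex $x$ on a terrible face and invoke Lemma~\ref{lem:badNonSat22}) matches the paper, and your count for (iii) is essentially the paper's argument---though note that the hypothesis of Lemma~\ref{lem:badNonSat22}(i) is \emph{not} met when a $6$-vertex sits on three $3$-faces, so you only get $v\in A_j$ is $A_j$-saturated for some $j\in\{3,4\}$; the count still yields the contradiction. The real gap is that for (i), (ii), and (iv) you never identify the paper's key recoloring device: the set $X$ of $A_4$-neighbors $u$ of $v$ that lie on a terrible face at $v$, have degree~$3$, and have pendent neighbor in $A_4$. The single swap $(A_3+x+X-v,\;A_4+v-X)$ is what actually works, and the obstructions to it are what drive the remaining case analysis (e.g.\ in (i) the obstruction forces $w$ to be an $A_4$-saturated $6$-vertex, which one then moves as well). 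Your appeals to Lemma~\ref{lem:coloring}(ii)--(iii) for ``blocking the extension of $x$'' do not produce this; $v$ is a $6^+$-vertex, so Lemma~\ref{lem:coloring}(iii) does not apply, and merely knowing $v$ is $A_3$-saturated does not by itself give an {\Ftf} of $G$.

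Part (v) has a genuine error. You switch between ``an {\Ftf} of $G-v$'' and ``placing the deleted $x$ into $A_3$,'' and neither reading yields a proof. More seriously, the claim that a new $A_4$-cycle through $v$ ``would force a forbidden short cycle in $G$'' (and the closing paragraph's assertion that an alternative $A_4$-path of length at least six would ``introduce a forbidden short cycle'') is simply false: $G$ forbids only $4$- and $5$-cycles, so long cycles in $G[A_4]$ are perfectly possible, and the absence of $4$-/$5$-cycles plays no role in the forest verification here. The paper's proof of (v) again deletes $x_1$, uses Lemma~\ref{lem:badNonSat22}(i) to pin $v\in A_3$, and then performs a careful multi-vertex swap involving auxiliary sets $X$, $Z$, $U$ (handling, for example, $A_3$-saturated pendent $4^-$-neighbors) together with a case split on whether some $A_4$-neighbor of $v$ lies off the terrible faces; the $3^-$-bound on neighbors is used at the very end to show that any such off-face $A_4$-neighbor would have to be a $6^+$-vertex, a contradiction. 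Your sketch does not contain these ingredients.
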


\begin{proof}
Recall that a bad vertex $v$ is on $(\deg(v)-4)$ $3$-faces, one of which is a terrible face. 
Let $v$ be a $d$-vertex on either $(d-4)$ $3$-faces or $(d-5)$ terrible faces, where one $3$-face is a terrible face $vxy$.
Let $x'$ and $y'$ be the pendent neighbors of $x$ and $y$, respectively, where $x'$ is a $4^-$-vertex.
By the minimality of $G$, there exists an {\Ftf} $(A_3,A_4)$ of $G-x$, and by Lemma~\ref{lem:badNonSat22}, we know $x', y,y' \in A_4$, $v\in A_3$, and $v$ is $A_3$-saturated.
Moreover, if $v$ is on $(d-4)$ $3$-faces, then $uv$ is incident with a $3$-face for every $A_4$-neighbor $u$ of $v$.
Let $X$ be the set of $A_4$-neighbors $u$ of $v$ such that $\deg(u) =3$,
%$3$-vertices $u\in A_4 \cap N_G(v)$ such that 
$uv$ is incident with a $3$-face, and the pendent neighbor of $u$ is also in $A_4$.
Note that $y \in X$.

To show (i), suppose to the contrary that $v$ is a bad vertex with only $6^-$-neighbors.
Let $uvw$ be the non-terrible face incident with $v$, where $u$ is a $3^-$-vertex.
Since $(A_3+x+X-v, A_4+v-X)$ is not an {\Ftf} of $G$, we conclude that $w$ is an $A_4$-saturated  $6$-vertex.
Now, $(A_3+xw+X-v, A_4+v-X-w)$ is an {\Ftf} of $G$, which is a contradiction.

To show (ii), suppose to the contrary that $v$ is a bad vertex on a $(F_2 \cup F_3)$-face $uvw$, where $u$ is a $3^-$-vertex and $w$ is a bad vertex.
By Lemma~\ref{lem:badNonSat22}, $u$ and $w$ must be in different parts. 
Then $(A_3+x+X-v, A_4+v-X)$ is an {\Ftf} of $G$ since neither $u$ nor $w$ can be $A_4$-saturated, which is a contradiction.

Since  $v$ is not $A_3$-saturated in $G-x$ when $v$ is a $6$-vertex on three $3$-faces, (iii) holds.

To show (iv), suppose to the contrary that $v$ is on $(d-4)$ terrible faces. 
(This is possible only when $d\geq 2d-8$.)
Now, $(A_3+x+X-v, A_4+v-X)$ is an {\Ftf} of $G$, which is a contradiction.

To show (v), suppose to the contrary that $v$ has only $3^-$-neighbors and $v$ is on $(d-5)$ terrible faces $vx_1y_1({=}vxy), \ldots, vx_{d-5}y_{d-5}$, where $x'_i$ and $y'_i$ are the pendent neighbors of $x_i$ and $y_i$, respectively, and $x_i'$ is a $4^-$-vertex for all $i \in \{1, \ldots, d-5\}$.

First, suppose that every $A_4$-neighbor of $v$ is in $\{x_1, y_1, \ldots, x_{d-5}, y_{d-5}\}$.
%$A_4\cap N_G(v) \subset \{x_1, y_1, \ldots, x_{d-5}, y_{d-5}\}$.
Recall that 
$X$ is the set of $A_4$-neighbors $u$ of $v$ whose pendent neighbor is in $A_4$, and $y_1 \in X$.
%$X$ is the set of $3$-vertices in $A_4\cap N_G(v)$ whose pendent neighbor is in $A_4$, and $y_1 \in X$.
We want that if a neighbor of $v$ is in $A_4$, then its pendent neighbor is in $A_3$. 
Let $Z = \{x_j \in A_4 \mid y_j \in A_4, x_j' \in A_3\}$, and $U = \{ x_j' \mid x_j \in Z \text{ and $x_j'$ is $A_3$-saturated}\}$.
% Since there are $d-2(d-5)=10-d$ neighbors of $v$ that is not on a terrible face with $v$ and all of them are in $A_3$, we have $Z \neq \emptyset$.
% at most one among $\{x_j,y_j\}$ is in $A_3$ for all $j \in \{2, \ldots, d-5\}$, and $v$ is $A_3$-saturated, we have that $|Z| \geq 2$.
Then $|(A_4\cap N_G(v) ) \setminus (X \cup Z)| \le d-6 \le 4$ so $(A_3+x_1+X+Z-v-U,A_4+v+U-X-Z)$ is an {\Ftf} of $G$, which is a contradiction. 
In particular, (v) holds when $d=10$.

Now assume that there is an $A_4$-neighbor $z$ of $v$ that is not in $\{x_1, y_1, \ldots, x_{d-5}, y_{d-5}\}$. 
%Now assume there is a vertex $z\in (A_4\cap N_G(v))\setminus\{x_1, y_1, \ldots, x_{d-5}, y_{d-5}\}$. 
Note that $N_G(v)=\{x_1, y_1, \ldots, x_{d-5}, y_{d-5}\}$ when $d=10$, so we may assume $d\in\{6, \ldots, 9\}$. 
Moreover, $v$ has $d-2(d-5) = 10-d$ neighbors $u$ where $uv$ is not on a terrible face and $|\{x_j,y_j\} \cap A_3| \le 1$ for all $j$.
Since $v$ is $A_3$-saturated, $(A_4\cap N_G(v) ) \setminus \{x_1, y_1, \ldots, x_{d-5}, y_{d-5}\}=\{z\}$, and $x_j$ and $y_j$ are in the different parts of $(A_3,A_4)$ for all $j \in \{2, \ldots, d-5\}$.
This implies $|(A_4\cap N_G(v)) \setminus X| \le 1 + (d-5) -1 = d-5 \le 4$ as $y_1\in X$ and $d \in \{6,\ldots,9\}$.
Since neither $(A_3+x_1+X-v,A_4+v-X)$ nor $(A_3+x_1z+X-v,A_4+v-X-z)$ is an {\Ftf} of $G$, we know $z$ is $A_4$-saturated and has an $A_3$-neighbor other than $v$.
Thus $u$ is a $6^+$-vertex, which is a contradiction.
Hence, (v) holds.
\end{proof}

\begin{lem}\label{lem:rc-mediumT3H3}
In the graph $G$, for $d\in\{7, \ldots, 10\}$, there is no non-bad $d$-vertex $v$ on an $F^*_2$-face, where $v$ is on $(d-6)$ other $3$-faces, each of which is either terrible or in $F^*_2$.
\rm{\ref{rc-mediumT3H3}}
\end{lem}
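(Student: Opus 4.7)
The plan is to argue by contradiction: assume that $v$ is a non-bad $d$-vertex with $d\in\{7,\ldots,10\}$ on an $F^*_2$-face $f_0=vtw$ (where $t$ is the $2$-vertex and $w$ the $5$-vertex or bad vertex guaranteed by $f_0\in F^*_2$ together with the non-badness of $v$), along with $(d-6)$ other $3$-faces $f_1,\ldots,f_{d-6}$, each terrible or in $F^*_2$. I will split into two cases according to whether some $f_i$ ($i\geq 1$) is terrible; in each case I will delete a judicious vertex, apply Lemma~\ref{lem:coloring} to analyze the resulting \Ftf{} of the smaller graph supplied by minimality, and then modify that partition to produce an \Ftf{} of $G$, contradicting the minimality of $G$.

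In Case 1, some $f_i$ (say $f_1=vx_1y_1$) is terrible, with pendent $4^-$-neighbor $x_1'$ of $x_1$. I delete $x_1$ and argue exactly as in the first paragraph of the proof of Lemma~\ref{lem:badNonSat22}: since $v$ is the only $6^+$-neighbor of $x_1$, Lemma~\ref{lem:coloring}(i)(ii) forces $v\in A_j$, $y_1,x_1',y_1'\in A_{7-j}$, and $v$ to be $A_j$-saturated for some $j\in\{3,4\}$. In Case 2 every $f_i$ ($i\geq 1$) is an $F^*_2$-face; I instead delete the $2$-vertex $t$ and apply Lemma~\ref{lem:coloring} to the resulting \Ftf{} of $G-t$, concluding that $v$ and $w$ lie in different parts, $v\in A_j$ and $w\in A_{7-j}$, and $v$ is $A_j$-saturated (since $v$ is the unique $A_j$-neighbor of $t$).

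The common counting step is that every $3$-face at $v$ carries at most one $A_j$-neighbor of $v$ (otherwise $A_j$ would contain a triangle, violating the forest condition), and the face losing a vertex to the deletion contributes strictly fewer, so that the number of $A_j$-neighbors of $v$ is at most $(d-6)+(10-d)=4$, forcing $j\in\{3,4\}$. For $j=3$ there is slack: I place the deleted vertex into $A_4$ and, in Case~1, possibly swap the $4^-$-vertex $x_1'$ between the two parts to break a potential cycle through $\{y_1,x_1'\}$, verifying that the forest and maximum-degree conditions survive. For $j=4$ the saturation is tight, pinning down the partition of all of $v$'s neighbors: exactly one vertex of each special face lies in $A_4$, together with all $10-d$ non-special neighbors. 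I then swap $v$ from $A_4$ to $A_3$ and simultaneously reposition an appropriate subset of its special-face neighbors and the pendent $4^-$-vertices, using that $w$ is a $5$-vertex or bad vertex (so its degree and $A_j$-count are controlled) and that $v$ is non-bad (ruling out the rigid structure in which all non-special neighbors of $v$ would be $3^-$-vertices) to certify that the reshuffled partition remains a valid \Ftf{} of $G$.

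The main obstacle is this tight case $j=4$: the rigidity forced by saturation leaves very little room to reinsert the deleted vertex, and the modifications must be carried out without introducing a triangle into $A_3$ or $A_4$ and without breaching the $\F_3$ or $\F_4$ maximum-degree bound anywhere. The crux is to chain together the right sequence of swaps, exploiting the pendent structure of terrible faces (so that pendent $4^-$-vertices can be moved freely) and the structural information about the $5$-vertex or bad vertex $w$ that is supplied by the very definition of an $F^*_2$-face.
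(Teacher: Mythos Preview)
Your high-level skeleton (delete, apply minimality, count $A_j$-neighbors, then swap) is the right one, and your counting bound $(d-6)+(10-d)=4$ is correct. But the proposal has a genuine gap precisely where you flag the difficulty, and the case split you introduce works against you.

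First, the split into Case~1 (delete a $3$-vertex on a terrible face) and Case~2 (delete the $2$-vertex on $f_0$) is unnecessary. The paper always deletes the $2$-vertex $x_1$ on the guaranteed $F^*_2$-face $vx_1y_1$. The point of doing so is that $y_1$ is a $5$-vertex or a bad vertex, and in either case $y_1$ \emph{cannot} be $A_4$-saturated once $v$ is excluded (a $5$-vertex has only three remaining neighbors in $G-x_1$ besides $v$; a bad $d'$-vertex contributes at most $(d'-5)+(8-d')=3$ possible $A_4$-neighbors from its terrible faces and off-face $3^-$-neighbors). Hence Lemma~\ref{lem:coloring} forces $y_1\in A_3$ and $v\in A_4$ outright: there is no ``$j=3$'' case to handle at all. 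Your Case~1, by deleting from a terrible face instead, loses this leverage and creates a spurious $j=3$ branch.

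Second, and more seriously, your plan for $j=4$ is a hand-wave where the actual work lies. Two specific ingredients are missing:
\begin{itemize}
\item An \emph{extremal choice} of the partition of $G-x_1$: the paper picks $(A_3,A_4)$ maximizing $|\{x_2,\ldots,x_{d-5}\}\cap A_3|$. This is what allows one to conclude (Claim~\ref{A_3inYandZ}) that for every terrible face $vx_jy_j$ one has $x_j\in A_3$, $y_j\in A_4$, $y_j'\in A_3$. Without this normalization you cannot control which of $x_j,y_j$ sits in $A_4$, and the ``reposition an appropriate subset'' step has no anchor.
\item The recursive structure at the bad vertices $y_j$ on $F^*_2$-faces: when $y_j\in A_3$ is $A_3$-saturated (Claim~\ref{W:A_3-sat}) and bad, moving $y_j$ to $A_4$ can overload $y_j$ there, so one must simultaneously move certain neighbors of $y_j$ on \emph{its} terrible faces (the sets $X_j$). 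Your proposal does not indicate any mechanism for this second layer of swaps.
\end{itemize}
Finally, your appeal to $v$ being non-bad (``ruling out the rigid structure in which all non-special neighbors of $v$ would be $3^-$-vertices'') does not do the work you want: the paper's argument does not use non-badness of $v$ anywhere in the body of the proof, and the final contradiction comes from the explicit swap $(A_3+x_1v+X-y_1-T-U-W,\;A_4+y_1+T+U+W-v-X)$, not from a degree condition on the off-face neighbors of $v$.
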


\begin{proof} 
Suppose to the contrary that a non-bad $d$-vertex $v$ is on an $F^*_2$-face $vx_1y_1$ and each of the other $(d-6)$ $3$-faces $vx_2y_2, \ldots, vx_{d-5}y_{d-5}$ on $v$ is either terrible or in $F^*_2$.
For each $i\in\{1, \ldots, d-5\}$, if $vx_iy_i$ is an $F^*_2$-face, then let $x_i$ be the $2$-vertex, so $y_i$ is either a $5$-vertex or a bad vertex.
If $vx_iy_i$ is a terrible face, then let $x'_i$ and $y'_i$ be the pendent neighbors of the $3$-vertices $x_i$ and $y_i$, respectively, and let $x'_i$ be a $4^-$-vertex.

By the minimality of $G$, there exists an {\Ftf} of $G-x_1$; let $(A_3, A_4)$ be such a partition where $|\{x_2,...,x_{d-5}\}\cap A_3|$ is maximized.  
Since $y_1$ cannot be $A_4$-saturated, by Lemma~\ref{lem:coloring}, $y_1$ is an $A_3$-saturated vertex in $A_3$ and $v$ is an $A_4$-saturated vertex in $A_4$. 
Also, every $A_4$-neighbor of $y_1$ except $v$
%vertex in $(N_G(y_1) \cap A_4) \setminus \{v\}$ 
is on a terrible face incident with $y_1$. 
Moreover, $v$ has at most $d-2(d-5) = 10-d$ neighbors $u$ such that $uv$ is not incident with a $3$-face and $|\{x_j,y_j\} \cap A_4| \le 1$ for all $j \in \{2, \ldots, d-5\}$.
Since $v$ is $A_4$-saturated, we know $x_j$ and $y_j$ are in different parts of $(A_3, A_4)$ for all $j \in \{2, \ldots, d-5\}$, and $v$ has exactly $(10-d)$ neighbors $u$ where $uv$ is not incident with a $3$-face, and all such neighbors are in $A_4$.

\begin{clm} \label{A_3inYandZ}
For $j \in \{2,\ldots,d-5\}$, if $vx_jy_j$ is a terrible face, then $x_j,y_j'\in A_3$ and $y_j\in A_4$.
\end{clm}
\begin{proof}
Note that $x_j$ and $y_j$ are in different parts. 
Suppose that $x_j\in A_4$ and $y_j\in A_3$.
Since $(A_3+x_j, A_4+x_1-x_j)$ is not an {\Ftf} of $G$, we have $x'_j\in A_3$.
Moreover, either $x'_j$ is $A_3$-saturated or $y'_j\in A_3$.
If $x'_j$ is $A_3$-saturated, then 
$(A_3-x'_j+x_j,A_4+x'_j+x_1-x_j)$ is an {\Ftf} of $G$, which is a contradiction. 
Hence,  $x'_j$ is not $A_3$-saturated, and so $y'_j\in A_3$. 
Now, $(A_3-y_j+x_j,A_4+y_j-x_j)$ is an {\Ftf} of $G-x_1$, which is contradiction to the choice of $(A_3,A_4)$.
Therefore, $x_j\in A_3$ and $y_j\in A_4$.
If $y'_j\in A_4$, then
$(A_3+y_j, A_4+x_1-y_j)$ is an {\Ftf} of $G$, which is a contradiction.  Thus $y'_j\in A_3$. \end{proof}

\begin{clm}\label{W:A_3-sat}
For $j\in\{1, \ldots, d-5\}$, let $vx_jy_j$ be an $F^*_2$-face.
If $y_j\in A_3$, then $y_j$ is $A_3$-saturated.
\end{clm}
\begin{proof}
It is clear for $j=1$.
Suppose that $j\ge 2$. 
If $y_j$ is not $A_3$-saturated, then $(A_3+x_j,A_4+x_1-x_j)$ is an {\Ftf} of $G$, which is a contradiction. Thus $y_j$ is $A_3$-saturated.
%If $y_j$ is a $5$-vertex, then
%$(A_3+x_j-y_j,A_4+x_1y_j-x_j)$ is an {\Ftf} of $G-x_1$, this contradicts to the choice of $(A_3,A_4)$. Thus $y_j$ is a bad vertex.
\end{proof}

For simplicity, define the following sets:
\begin{eqnarray*}
T &=& \{ x_j  \mid \text{$vx_jy_j$ is a terrible face}\},\\
%Z &=& \{x_j \in A_3 \mid \text{$vx_jy_j$ is a terrible face such that $x_j' \in A_4$ and $y_j' \in A_3$}\},\\
U &=& \{x_j\in A_3 \mid \text{$vx_jy_j$ is an $F^*_2$-face}\},\\
W &=& \{y_j \in A_3 \mid \text{$vx_jy_j$ is an $F^*_2$-face}\}.
\end{eqnarray*}

%\begin{clm} \label{A_3inYandZ}
%For $u\in A_3\cap N_G(v)$, if $uv$ is %incident with a terrible face, then $u\in Y %\cup Z$.
%\end{clm}
%\begin{proof}
%Let $vx_jy_j$ be a terrible face for some $j \in \{2,\ldots,d-5\}$. 
%Note that $x_j$ and $y_j$ are in different parts. 
%Suppose to the contrary that $x_j\in A_3$ %but $x_j\not\in Y\cup Z$. 
%This implies $y_j, x'_j, y'_j\in A_4$, so %$(A_3+y_j, A_4+x_1-y_j)$ is an {\Ftf} of %$G$, which is a contradiction. 
%Suppose to the contrary that $y_j\in A_3$ %but $y_j\not\in Y\cup Z$. 
%This implies $x_j, y'_j\in A_4$. 
%Note that $x'_j$ is a $4^-$-vertex. 
%Now, either $(A_3+x_j, A_4+x_1-x_j)$ or %$(A_3+x_j-x'_j, A_4+x_1-x_j+x'_j)$ is an %{\Ftf} of $G$, which is a contradiction. 
%\end{proof}

Since every vertex in $W$ is $A_3$-saturated by Claim \ref{W:A_3-sat}, as long as $y_j\in W$ is a bad vertex, every $A_4$-neighbor of $y_j$ except $v$  
%vertex in $(A_4\cap N_G(y_j))\setminus\{v\}$ 
is on a terrible face incident with $y_j$.
See Figure~\ref{fig:bad6,7,8}. 
For a bad vertex $y_j\in W$, define 
\[X_j =\{u \in A_4 \cap N_G(y_j) \mid \text{$u$ and $y_j$ are on a terrible face  and the pendent neighbor of $u$ is in } A_4\}.\]
In addition, let $X$ be the union of all such $X_j$'s.
Together with Claim \ref{A_3inYandZ}, 
$(A_3+x_1v+X -y_1-T- U - W,A_4+y_1+T+ U + W-v-X)$ is an {\Ftf} of $G$, which is a contradiction.
\end{proof}

\section{Future Research Directions}\label{sec:future}

We proved that planar graphs without $4$-cycles and $5$-cycles have an \Ftf.
It is known that for every integer $k$, there exists a planar graph without $4$-cycles and $5$-cycles that does not have an \FF{1}{k}, since there exists a planar graph without $4$-cycles and $5$-cycles that does not have an \DeltaDelta{1}{k}~\cite{SN2018} as mentioned in the introduction.
It would be interesting to determine if there exists an integer $d_2$ such that planar graphs without $4$-cycles and $5$-cycles have an \FF{2}{d_2}. 
We pose the only remaining case as the following question:

\begin{ques}
Does there exist an integer $d_2$ such that every planar graph without $4$-cycles and $5$-cycles has an \FF{2}{d_2}?
\end{ques}

Actually, it is even unknown when the second part is allowed to be a forest of unbounded degree, namely, an \FF{2}{\infty}.
Moreover, it is not determined if every planar graph without $4$-cycles and $5$-cycles has an \FF{1}{\infty}.
This is our second question. 

\begin{ques}
Is it true that every planar graph without $4$-cycles and $5$-cycles has an \FF{1}{\infty}?
\end{ques}

% As the above two questions might be difficult to be resolved, we put forth a conjecture, which we believe to be the easiest unknown case as an approach to resolve the above two questions. 

% \begin{conj}
% Every planar graph without $4$-cycles and $5$-cycles has an \FF{3}{4}.
% \end{conj}

% Note that the above conjecture would imply all three of our results. 

\section*{Acknowledgements}
Ilkyoo Choi was supported by the Basic Science Research Program through the National Research Foundation of Korea (NRF) funded by the Ministry of Education (NRF-2018R1D1A1B07043049), and also by the Hankuk University of Foreign Studies Research Fund.
Boram Park was supported by Basic Science Research Program through the National Research Foundation of Korea (NRF) funded by the Ministry of Science, ICT and Future Planning (NRF-2018R1C1B6003577).

\bibliography{ref}{}
\bibliographystyle{plain}
 
\end{document}